\newtheorem{theorem}{Theorem}[section]
\newtheorem{proposition}[theorem]{Proposition}
\newtheorem{corollary}[theorem]{Corollary}
\newtheorem{lemma}[theorem]{Lemma}
\theoremstyle{definition}
\newtheorem{definition}[theorem]{Definition}
\newtheorem{remark}[theorem]{Remark}
\newtheorem{example}[theorem]{Example}
\newtheorem{cl}{Claim}
\newtheorem*{claim}{Claim}
\newcommand{\N}{\mathbb{N}}                                          
\newcommand{\R}{\mathbb{R}}                                                                         
\newcommand{\eps}{\varepsilon} 
\newcommand{\ds}{\displaystyle} 
\DeclareMathOperator{\CAT}{CAT}                                        
\DeclareMathOperator{\CBB}{CBB}   
\DeclareMathOperator{\Img}{Im} 
\DeclareMathOperator{\inte}{int} 
\DeclareMathOperator{\UAG}{UAG} 
\DeclareMathOperator{\dom}{dom}
\DeclareMathOperator{\epi}{epi}
\begin{document}

\title{Local linear convergence of alternating projections in metric spaces with bounded curvature}

\author{Adrian S. Lewis$^{a}$, Genaro L\'{o}pez-Acedo$^{b}$, Adriana Nicolae$^{c}$}
\date{}
\maketitle

\begin{center}
{\scriptsize
$^{a}$School of Operations Research and Information Engineering, Cornell University, Ithaca, NY
\ \\
$^{b}$Department of Mathematical Analysis - IMUS, University of Seville, C/ Tarfia s/n, 41012 Seville, Spain
\ \\
$^{c}$Department of Mathematics, Babe\c s-Bolyai University, Kog\u alniceanu 1, 400084 Cluj-Napoca, Romania \\
\ \\
E-mail addresses: adrian.lewis@cornell.edu (A. S. Lewis), glopez@us.es (G. L\'{o}pez-Acedo),\\ 
anicolae@math.ubbcluj.ro (A. Nicolae)
}
\end{center}

\maketitle

\begin{abstract}
We consider the popular and classical method of alternating projections for finding a point in the intersection of two closed sets.  By situating the algorithm in a metric space, equipped only with well-behaved geodesics and angles (in the sense of Alexandrov), we are able to highlight the two key geometric ingredients in a standard intuitive analysis of local linear convergence.  The first is a transversality-like condition on the intersection;  the second is a convexity-like condition on one set:  ``uniform approximation by geodesics''.\\

\noindent {\em MSC:  65K10 (primary),  53C20 (secondary)} 
\\

\noindent {\em Keywords}: Alternating projections, \and linear convergence, \and Alexandrov spaces, \and almost convexity, \and curvature bounds.
\end{abstract}

\section{Introduction}

An important problem with a large range of applications in many branches of mathematics and other sciences consists in finding a point in the intersection of two given closed sets $A$ and $B$ in $\R^n$. Under the assumption that the sets are additionally convex, a straightforward algorithm widely used since the work of von Neumann \cite{vNeu50} is the alternating projection method which defines a sequence $(x_n)$ by iteratively projecting the current point (that belongs to one of the sets) onto the other set. Alternating projections can be slow to converge, but in the presence of appropriate conditions regarding the way the two sets intersect, it is still possible to obtain linear convergence meaning that the distance from $x_n$ to the limit point is bounded above by $k\cdot a^n$, where $k$ and $a$ are positive constants with $a<1$. A classical assumption going back to Gubin, Polyak, and Raik \cite{GubPolRai67} says that one set should intersect the interior of the other one. Other less restrictive intersection conditions can be found in the survey by Bauschke and Borwein \cite{BauBor93}. 

When dealing with sets that are not necessarily convex, the metric projection might be multi-valued, and although one can adapt the definition of the method to choose one of the nearest points, in general one cannot expect to obtain global convergence to a solution of the problem. However, it was noticed in practice that locally, that is, if the initial point is close enough to the intersection of the two sets, the alternating projection method works well. This observation and the variety of important applications in absence of convexity triggered in recent years a closer analysis of this classical method which led to a series of local linear convergence results assuming appropriate conditions at an intersection point $z$ of the two sets and considering the starting point $x_0$ sufficiently close to $z$. 

Lewis, Luke, and Malick \cite{LewLukMal09} focused on two local geometric features needed to prove linear convergence of alternating projections in an elementary way. Namely, they showed that if $A$ and $B$ intersect transversally at $z \in A \cap B$ and one of the sets is super-regular at $z$, then the alternating projection method converges linearly to an intersection point (provided $x_0$ is chosen sufficiently close to $z$). Transversality is a standard geometric condition and requires that $N_A(z) \cap (-N_B(z))= \{0\}$, where $N_A(z)$ and $N_B(z)$ are the normal cones to $A$ and $B$, respectively, at $z$. For convex $A$ and $B$, transversality actually means that the sets cannot be separated by a hyperplane. Super-regularity generalizes a fundamental property of a closed convex set which characterizes the projection of a given point onto the set via the property that the point, its projection, and any point of the set form a right or obtuse angle. In this sense, super-regularity means in broad lines that the set is not too far from being convex. More precisely, $A$ is called super-regular at $z$ if given any $\eps > 0$, for any $y \notin A$ and $x' \in A$ sufficiently close to $z$ and any $x \in P_A(y)$ with $x \ne x'$, we have that the angle between the vectors $y-x$ and $x'-x$ is at least $\pi/2- \eps$ (here $P_A$ denotes the metric projection onto the set $A$). For a closed set, convexity clearly implies super-regularity. In the nonconvex case, there are also other well-known properties that are highly relevant in practice and imply super-regularity such as prox-regularity. The convergence result from \cite{LewLukMal09} was later improved by Drusvyatskiy, Ioffe, and Lewis \cite{DruIofLew15} by removing the hypothesis that one of the sets has to be super-regular. In fact, they even weakened the transversality condition by replacing it with another geometric property called intrinsic transversality, which, as opposed to transversality, is a property specific to the sets and is not related to the Euclidean space where they lie. 

A notion related to intrinsic transversality was introduced by Noll and Rondepierre \cite{NolRen16} and called $0$-separability. This notion combined with H\"{o}lder regularity is weaker than intrinsic transversality and still yields local linear convergence of alternating projections. The set $A$ intersects the set $B$ separably at $z\in A \cap B$ if there exists $\alpha > 0$ such that, for any $x \in A \setminus B$ sufficiently close to $z$ and any $y \in P_B(x) \setminus A$ and $x' \in P_A(y)$, the angle between the vectors $x-y$ and $x'-y$ is at least $\alpha$. Actually, as noted by Drusvyatskiy and Lewis \cite{DruLew19}, for the intuitive geometric reasoning from \cite{LewLukMal09} to work, it is enough to assume super-regularity of one of the sets and, instead of transversality, its separable intersection with the other one.

The projection onto a set is in fact a purely metric concept, and so it is natural to consider the described problem in nonlinear settings with a rich enough geometry. Even in the convex case in Hilbert spaces, both the orthogonality of the metric projection and the geometric structure which allows one to establish a relation between angles and sides of triangles seem essential to this approach to convergence of alternating projections. Since the structural characteristics of the distance function in Alexandrov spaces with an upper curvature bound  are sufficient for the aforementioned properties to hold, we consider this context and briefly discuss in Section \ref{sect-prelim} some of its basic properties, along with other notions used in what follows.

As a first step in this direction  we introduce in Section \ref{sect-curv-conv} the notion of uniform approximation by geodesics at a point for subsets of a geodesic space, a property that is satisfied, in Alexandrov spaces with an upper curvature bound, by locally compact sets with finite extrinsic curvature at that point (see Definition \ref{defi-fi-cur} and Proposition \ref{prop-subset-extc}). We relate uniform approximation by geodesics with other concepts that indicate how close a set is to being convex such as the notions of $2$-convexity, an important case of almost convexity introduced by Lytchak \cite{Lyt05}, positive reach in the sense of Federer \cite{Fed59}, or prox-regularity originating from Poliquin and Rockafellar \cite{PolRoc96}. Moreover, we show that transforming a convex set via a sufficiently smooth function defined between two Euclidean spaces results in a set that is uniformly approximable by geodesics. Furthermore, we justify why the notion of uniform approximation by geodesics differs from the convexity-like notions mentioned before. Other natural classes of sets that are uniformly approximable by geodesics include epigraphs of approximately convex functions or sets defined by $C^1$ inequality constraints satisfying the Mangasarian--Fromovitz condition.

In Section \ref{sect-alt-proj} we study the two main ingredients, super-regularity and separable intersection, used to prove the local linear convergence of alternating projections. In Theorem \ref{thm-prop-p-superreg}, we prove that in Alexandrov spaces with an upper curvature bound, uniform approximation by geodesics implies super-regularity. Using the analysis carried out in Section 3, this allows us to conclude that in suitable settings (in particular, in smooth Riemannian manifolds) compact sets that are $2$-convex as well as images of convex sets under sufficiently smooth functions between two Euclidean spaces are super-regular. Even though our main focus is the study of nonconvex sets, for the separable intersection property we also consider the convex case. To this end we introduce a nonlinear counterpart of transversality of two sets at a point and show that it implies separable intersection at that point. Besides, we prove that transversality holds when assuming that one set intersects the interior of the other set, thus generalizing the condition from \cite{GubPolRai67}  to the framework of Alexandrov spaces. 
Finally, we extend the local linear convergence of alternating projections to the setting of Alexandrov spaces of curvature bounded above. The study of the alternating projection method for convex sets in Alexandrov spaces was initiated by Ba\v{c}\'{a}k, Searston, and Sims \cite{BacSeaSim12} who showed that in the presence of nonpositive curvature the algorithm works well (see also \cite{ChoJiLim18} for the case of an arbitrary upper curvature bound). Regarding the nonconvex case in the context of Alexandrov spaces, as far as we know, Theorem \ref{thm-main-conv} is the first convergence result in this line.

\section{Preliminaries} \label{sect-prelim}
This section introduces some notation and briefly explains certain properties of geodesic metric spaces that we make use of in what follows. These concepts and related ones are treated at length, e.g., in the monographs \cite{AleKapPet19,Bri99,Bur01}.

\subsection{Geodesic spaces}
Let $(X,d)$ be a metric space. We denote the open (resp., closed) ball centered at $x\in X$ with radius $r>0$ by $B(x,r)$ (resp., $\overline{B}(x,r)$). For $C \subseteq X$, the {\it metric projection} $P_C$ onto $C$ is the mapping $P_C : X\to 2^C$ defined by
\[P_C(z)=\{ y \in C : d(z,y)=\mbox{dist}(z,C)\}, \quad z\in X,\]
where  $\mbox{dist}(z,C) = \inf_{y \in C}d(z,y)$.

Let $x,y \in X$. A {\it curve} from $x$ to $y$ is a continuous mapping $f : [a,b] \subseteq \R \to X$ such that $f(a)=x$ and $f(b)=y$. A partition of $[a,b]$ is a finite and ordered set $\Delta = (t_0, t_1, \ldots, t_n)$ of points in $[a,b]$ such that $a=t_0 \le t_1 \le \ldots \le t_n = b$. The {\it length} of a curve $f : [a,b] \to X$ is $\sup_{\Delta}\sum_{i=0}^{n-1}d(f(t_i),f(t_{i+1}))$, where the supremum is taken over all partitions $\Delta$ of $[a,b]$. If the length of $f$ is finite, then $f$ is called {\it rectifiable}. If the length of the restriction $f|_{[t_1,t_2]}$ equals $t_2 - t_1$ for all $t_1,t_2 \in [a,b]$ with $t_1 \le t_2$, then $f$ is {\it parameterized by arc length} (or {\it unit-speed}). Note that every rectifiable curve can be reparameterized by arc length.

Given a metric space $(X,d)$, define $d^X : X \times X \to [0,\infty]$ by assigning to a pair of points $x, y \in X$ the infimum of the lengths of rectifiable curves from $x$ to $y$ (if there are no such curves, then $d^X (x,y) = \infty$). If every two points in $X$ can be joined by a rectifiable curve, $d^X$ is a metric called the {\it length metric} associated to $d$. If $d=d^X$, then $(X,d)$ is called a {\it length space}. Note that $(X,d^X)$ is a length space. The {\it induced length metric}  $d^A$ on $A \subseteq X$ is the length metric associated to the restriction of $d$ to $A \times A$. 

If $X$ is a proper metric space and $x, y \in X$ such that there exists a rectifiable curve in $X$ joining $x$ and $y$, then there exists a curve whose length is equal to the infimum of the lengths of curves joining $x$ and $y$ (see, e.g., \cite[Proposition 1.4.12]{Pap05}).

Let $x,y \in X$. A {\it geodesic} from $x$ to $y$ is a mapping $\gamma:[0,l]\subseteq\R\to X$ such that $\gamma(0)=x$, $\gamma(l)=y$, and 
\[d(\gamma(t_1),\gamma(t_2))=|t_1-t_2|\quad \text{for all }t_1,t_2\in[0,l].\]
In this case we also say that $\gamma$ {\it joins} $x$ and $y$. The image of a geodesic joining $x$ and $y$ is called a {\it geodesic segment} joining $x$ and $y$. Note that $d(x,y) = l$, which is also equal to the length of $\gamma$. 

We say that $(X,d)$ is a {\it (uniquely) geodesic space} if every two points in $X$ are joined by a (unique) geodesic segment. Every geodesic space is a length space. Conversely, by the Hopf--Rinow theorem, every length space that is complete and locally compact is geodesic.

Let $(X,d)$ be a metric space, and consider the direct product $X \times \R$
\index{product of metric spaces}
equipped with the metric
\[d_2((x_1,y_1),(x_2,y_2)) = \sqrt{d(x_1,x_2)^2 + |y_1 - y_2|^2},\]
where $x_1, x_2 \in X$ and $y_1, y_2 \in \R$. If $X$ is a geodesic space, then so is $X \times \R$. Moreover, if $\gamma : [0,r] \to X$ and $\alpha : [0,s] \to \R$ are geodesics, where $l = \sqrt{r^2 + s^2} > 0$, then $\sigma : [0,l] \to X \times \R$ defined by $\sigma(t) = (\gamma(rt/l), \alpha(st/l))$ for all $t \in [0,l]$ is a geodesic. 

If $(X,d)$ is a uniquely geodesic space, $z\in X$, and $c > 0$, we say that $X$ has the {\it $c$-geodesic extension property} at $z$ if for any distinct $x,y \in B(z,c)$ with $d(x,y) < c$, the geodesic from $x$ to $y$ can be extended beyond $y$ to a geodesic of length $c$.  

In the following, if nothing else is mentioned, we always assume that $(X,d)$ is a geodesic space. A point $z \in X$ belongs to a geodesic segment joining $x$ and $y$ if and only if there exists $t\in [0,1]$ such that $d(x,z)=td(x,y)$ and $d(y,z)=(1-t)d(x,y)$. If there is a unique geodesic segment joining $x$ and $y$, we denote it by $[x,y]$. A subset $A$ of $X$ is {\it convex} if for every $x,y \in A$, all geodesic segments that join $x$ and $y$ are contained in $A$ and {\it weakly convex} if for every $x,y \in A$, at least one geodesic segment that joins $x$ and $y$ is contained in $A$. 

Given $\lambda \in \R$, a function $f : I \to \R$ defined on an interval $I \subseteq \R$ is called $\lambda$-convex if $f(t) + \lambda t^2$ is convex on $I$. More generally, we say that a function $f : X \to \R$ is {\it $\lambda$-convex} if for every geodesic $\gamma : [0,l] \to X$, $(f \circ \gamma)(t) + \lambda t^2$ is convex on $[0,l]$. This amounts to the inequality
\[f(\gamma((1-\alpha)t_1 + \alpha t_2)) \le (1-\alpha)f(\gamma(t_1)) + \alpha f(\gamma(t_2)) + \lambda \alpha(1-\alpha)d(\gamma(t_1),\gamma(t_2))^2\]
for all $t_1, t_2 \in [0,l]$ and all $\alpha \in [0,1]$.

A {\it geodesic triangle} $\Delta(x_1,x_2,x_3)$ in $X$ consists of three points $x_1, x_2, x_3 \in X$ (its vertices) and three geodesic segments (its sides) joining each two points. A triangle $\Delta(\overline{x}_1, \overline{x}_2, \overline{x}_3)$ in $\R^2$ is a {\it comparison triangle} for a geodesic triangle $\Delta(x_1,x_2,x_3)$ if $d(x_i,x_j) = d_{\R^2}(\overline{x}_i,\overline{x}_j)$ for $i,j \in \{1,2,3\}$. 

Let $\gamma:[0,l]\to X$ and $\gamma':[0,l']\to X$ be two nonconstant geodesics with $\gamma(0)=\gamma'(0)$. For $t \in (0,l]$ and $t' \in (0,l']$, consider a comparison triangle $\Delta(\overline{\gamma(0)}, \overline{\gamma(t)}, \overline{\gamma'(t')})$ in $\mathbb{R}^2$, and denote its interior angle at $\overline{\gamma(0)}$ by $\overline{\angle}_{\gamma(0)}\left(\gamma(t),\gamma'(t')\right)$. The {\it Alexandrov angle} $\angle(\gamma,\gamma')$ between the geodesics $\gamma$ and $\gamma'$ is defined as 
\[\angle(\gamma,\gamma') = \limsup_{t,t' \to 0}\overline{\angle}_{\gamma(0)}\left(\gamma(t),\gamma'(t')\right) \in [0,\pi].\] 

For $x,y,z$ three points in a geodesic space $X$ with $x \ne y$ and $x \ne z$, if there is a unique geodesic from $x$ to $y$, as well as from $x$ to $z$, then we denote the corresponding Alexandrov angle by $\angle_x(y,z)$. In general, the sum of adjacent angles is at least $\pi$. In other words, suppose that $\gamma:[0,l]\to X$ is a nonconstant geodesic, and let $t_0 \in (0,l)$. Define $\gamma_1:[0,t_0]\to X$ by $\gamma_1(t)=\gamma(t_0-t)$ and $\gamma_2:[0,l-t_0]\to X$ by $\gamma_2(t)=\gamma(t_0+t)$. If $\gamma_3$ is a nonconstant geodesic satisfying $\gamma_3(0)=\gamma(t_0)$, then $\angle(\gamma_1,\gamma_3) + \angle(\gamma_3,\gamma_2) \ge \pi$. 

\subsection{Alexandrov spaces}

For $\kappa \in \R$, let $M_\kappa^2$ denote the complete, simply connected, $2$-dimensional Riemannian manifold of constant sectional curvature $\kappa$. In particular,  $M_0^2 = \R^2$. Recall that, for $\kappa > 0$, $M_\kappa^2$ is obtained from the spherical space $\mathbb{S}^2$ by scaling the spherical distance with $1/\sqrt{\kappa}$, while for $\kappa < 0$, $M_\kappa^2$ is obtained from the hyperbolic space $\mathbb{H}^2$ by scaling the hyperbolic distance with $1/\sqrt{-\kappa}$. We denote the diameter of $M_\kappa^2$ by $D_\kappa$. In other words, $D_\kappa = \infty$ if $\kappa \le 0$, while $D_\kappa = \pi/\sqrt{\kappa}$ if $\kappa > 0$. 

The cosine law in $M_\kappa^2$ states that in a geodesic triangle with vertices $x,y,z$ and vertex angle $\alpha$ at $x$ (which is equal to the Alexandrov angle determined by the side joining $x$ and $y$ and the one joining $x$ and $z$; see \cite[Chapter I, Proposition 2.9]{Bri99}) we have for $\kappa > 0$,  
\[\cos(\sqrt{\kappa}d(y,z)) = \cos(\sqrt{\kappa}d(x,y))\cos(\sqrt{\kappa}d(x,z))+ \sin(\sqrt{\kappa}d(x,y))\sin(\sqrt{\kappa}d(x,z))\cos \alpha.\]

The subsequent result  is a consequence of the cosine law and will be used in the proof of Theorem \ref{thm-prop-p-superreg}. Although it is straightforward, we include the proof for completeness.

\begin{lemma}\label{lemma-small-angle}
Let $\kappa > 0$, $x,y,z \in M_\kappa^2$ with $d(y,x) < D_\kappa / 2$, $d(z,x) < D_\kappa$ and $\angle_{x}(y,z) < \pi/2$. Then for all $t > 0$ there exists $u \in [x,z]$ with $d(x,u) < t$ and $d(y,u) < d(y,x)$.
\end{lemma}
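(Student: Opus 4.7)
The plan is to parameterize the geodesic segment $[x,z]$ by arc length and study the distance to $y$ as a scalar function of the parameter. Let $\gamma : [0, d(x,z)] \to M_\kappa^2$ be the unit-speed geodesic with $\gamma(0)=x$ and $\gamma(d(x,z))=z$, and for $s \in [0, d(x,z)]$ set $u_s = \gamma(s)$ and $f(s) = d(y, u_s)$. Since $u_s$ lies on the geodesic from $x$ to $z$, the side joining $x$ and $u_s$ is a subsegment of the side joining $x$ and $z$, so for each $s \in (0, d(x,z)]$ the Alexandrov angle at $x$ in the triangle with vertices $x,y,u_s$ equals $\alpha := \angle_x(y,z) < \pi/2$. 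Note also that the hypothesis $\angle_x(y,z) < \pi/2$ implicitly requires $x \ne y$ and $x \ne z$, so $d(x,y) > 0$ and $d(x,z) > 0$.

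Next I would apply the cosine law in $M_\kappa^2$ recalled before the lemma to the triangle $\Delta(x,y,u_s)$, obtaining
\[
\cos\bigl(\sqrt{\kappa}\, f(s)\bigr) = \cos\bigl(\sqrt{\kappa}\, d(x,y)\bigr)\cos\bigl(\sqrt{\kappa}\, s\bigr) + \sin\bigl(\sqrt{\kappa}\, d(x,y)\bigr)\sin\bigl(\sqrt{\kappa}\, s\bigr)\cos\alpha.
\]
This identity holds for every $s \in [0, d(x,z)]$ (including $s = 0$, trivially). In particular, the right-hand side is a smooth function of $s$, and since $f(0) = d(x,y) < D_\kappa/2 = \pi/(2\sqrt{\kappa})$, we have $\sin(\sqrt{\kappa}\, f(0)) > 0$, so the implicit function theorem shows that $f$ is smooth near $0$.

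Differentiating both sides at $s = 0$ and using $f(0) = d(x,y)$ yields
\[
-\sqrt{\kappa}\sin\bigl(\sqrt{\kappa}\, d(x,y)\bigr)\, f'(0) = \sqrt{\kappa}\sin\bigl(\sqrt{\kappa}\, d(x,y)\bigr)\cos\alpha,
\]
and since $\sin(\sqrt{\kappa}\, d(x,y)) > 0$ we conclude $f'(0) = -\cos\alpha < 0$. Hence $f$ is strictly decreasing on some interval $[0, s_0]$ with $s_0 > 0$, and so for any $t > 0$ we may pick $s \in (0, \min\{t, s_0, d(x,z)\})$ and set $u = u_s$, which satisfies $d(x,u) = s < t$ and $d(y,u) = f(s) < f(0) = d(y,x)$.

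The only delicate points I foresee are checking that $\angle_x(y,u_s) = \alpha$ (so that the cosine law can be applied with the same angle for every $s$) and verifying that $\sin(\sqrt{\kappa}\, d(x,y)) \ne 0$ at $s = 0$; both are immediate from, respectively, the fact that $u_s$ lies on a geodesic from $x$ to $z$ and from the bound $d(x,y) < D_\kappa/2$. No estimates beyond an infinitesimal computation are needed.
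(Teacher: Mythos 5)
Your proof is correct, but it takes a different route from the paper's. You work at the vertex $x$: you write $f(s)=d(y,\gamma(s))$ along $[x,z]$, express $\cos(\sqrt{\kappa}f(s))$ by the cosine law using the (constant) angle $\alpha=\angle_x(y,z)$, and compute the first variation $f'(0)=-\cos\alpha<0$, which gives the local decrease you need; the only analytic ingredients are $0<\sqrt{\kappa}\,d(x,y)<\pi/2$ (so $\arccos$ is smooth at the relevant value and $\sin(\sqrt{\kappa}\,d(x,y))>0$) and the fact that the angle at $x$ is unchanged when $z$ is replaced by an interior point of $[x,z]$ — both of which you verify. The paper instead works at the foot $w=P_{[x,z]}(y)$ of the projection of $y$ onto $[x,z]$: there the angle $\angle_w(y,x)$ is at least $\pi/2$, and comparing the two cosine-law identities for $d(y,x)$ and $d(y,u)$ with $u\in[x,w]$ gives $d(y,u)<d(y,x)$ by pure monotonicity of $\cos$ and $\sin$ on the relevant ranges, with no differentiation. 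The paper's argument is entirely algebraic and actually yields the stronger statement that $d(y,\cdot)$ decreases along all of $[x,w]$, not just near $x$; yours is a standard first-variation computation that is shorter to set up and makes the role of the acute angle at $x$ completely transparent, at the cost of invoking smoothness of $f$ near $0$. Both are complete proofs of the lemma as stated.
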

\begin{proof}
Let $w = P_{[x,z]}(y)$. Then $\alpha = \angle_{w}(y,x) \ge \pi/2$. Note that $w \ne x$ since $\angle_{x}(y,z) < \pi/2$.  Let $u \in [x,w]$ with $u \ne x$. Applying the cosine law we get
\[\cos(\sqrt{\kappa}\,d(y,x)) = \cos(\sqrt{\kappa}\,d(y,w)) \cos(\sqrt{\kappa}\,d(x,w))  + \sin(\sqrt{\kappa}\,d(y,w)) \sin(\sqrt{\kappa}\,d(x,w)) \cos \alpha\]
and 
\[\cos(\sqrt{\kappa}\,d(y,u)) = \cos(\sqrt{\kappa}\,d(y,w)) \cos(\sqrt{\kappa}\,d(u,w))  + \sin(\sqrt{\kappa}\,d(y,w)) \sin(\sqrt{\kappa}\,d(u,w)) \cos \alpha.\]
Since $\cos \alpha \le 0$ and $d(u,w) < d(x,w)$, it follows that $d(y,u) < d(y,x)$.
\end{proof}

Let $(X,d)$ be a geodesic space. As in $\R^2$, one can define comparison triangles in $M_\kappa^2$. A triangle $\Delta(\overline{x}_1, \overline{x}_2, \overline{x}_3)$ in $M^2_{\kappa}$ is a {\it comparison triangle} for a geodesic triangle $\Delta(x_1,x_2,x_3)$ in $X$ if $d(x_i,x_j) = d_{M^2_{\kappa}}(\overline{x}_i,\overline{x}_j)$ for $i,j \in \{1,2,3\}$. For $\kappa$ fixed, comparison triangles in $M^2_{\kappa}$ of geodesic triangles having perimeter less than $2D_\kappa$ always exist and are unique up to isometry.

A geodesic triangle $\Delta=\Delta(x_1,x_2,x_3)$ is said to satisfy the {\it $\CAT(\kappa)$ inequality} if for every comparison triangle $\overline{\Delta} = \overline{\Delta}(\overline{x}_1,\overline{x}_2,\overline{x}_3)$ in $M^2_\kappa$ of $\Delta$ and every $x,y \in \Delta$ we have
\[d(x,y) \le d_{M^2_{\kappa}}(\overline{x},\overline{y}),\]
where  $\overline{x},\overline{y} \in \overline{\Delta}$ are the comparison points of $x$ and $y$; i.e., if $x$ belongs to the side joining $x_i$ and $x_j$, then $\overline{x}$ belongs to the side joining $\overline{x}_i$ and $\overline{x}_j$ and satisfies $d(x_i,x)=d_{M_\kappa^2}(\overline{x}_i,\overline{x})$.

We include in what follows some definitions and properties concerning metric spaces that have globally upper or lower curvature bounds in the sense of Alexandrov.

Let $X$ be a metric space where every two points at distance less than $D_\kappa$ can be joined by a geodesic. We say that $X$ is a {\it $\CAT(\kappa)$ space} (or that it has {\it curvature bounded above by $\kappa$} in the sense of Alexandrov) if every geodesic triangle having perimeter less than $2D_\kappa$ satisfies the $\CAT(\kappa)$ inequality. Another equivalent condition for $X$ to be a $\CAT(\kappa)$ space is that the Alexandrov angle between the sides of any geodesic triangle in $X$ (of perimeter smaller than $2D_\kappa$) is less than or equal to the angle between the corresponding sides of its comparison triangle in $M_\kappa^2$.  A $\CAT(\kappa)$ space is also a $\CAT(\kappa')$ space for every $\kappa'\geq \kappa$. Other characterizations of $\CAT(\kappa)$ spaces are given, e.g., in \cite[Chapter II, Proposition 1.7 and Theorem 1.12]{Bri99}.

We briefly mention next some facts on $\CAT(\kappa)$ spaces needed further on. Let $X$ be a $\CAT(\kappa)$ space. Observe first that points in $X$ at distance less than $D_\kappa$ are joined by a unique geodesic segment and balls of radius smaller than $D_\kappa/2$ are convex.

Assuming that the diameter of $X$ is sufficiently small for $\kappa > 0$, there exists $\lambda < 1$ such that, for all $x \in X$, the function $d(\cdot, x)^2$ is $(-\lambda)$-convex. In other words, $\CAT(\kappa)$ spaces are $2$-uniformly convex (see \cite{Kuw14, Oht07}). Moreover, there also exists $\lambda'>0$ such that, for all geodesic segments $[x,y]$ in $X$, the function $\text{dist}(\cdot, [x,y])$ is $\lambda'$-convex (see, e.g., \cite[Lemma 8.6.6]{AleKapPet19}). Note that, by reducing the diameter of the space, the convexity constants $\lambda$ and $\lambda'$ can be made arbitrarily close to $1$ and arbitrarily close to $0$, respectively. 

If $x,y,z \in X$ with $\max\{d(x,y),d(x,z)\} < D_\kappa$, then the function $(x,y,z) \mapsto \angle_x(y,z)$ is upper semicontinuous, and if $x$ is fixed, then the function $(y,z) \mapsto \angle_x(y,z)$ is continuous (see, e.g., \cite[Chapter II, Proposition 3.3]{Bri99}). Note also that one can take comparison triangles in $M_\kappa^2$ instead of $\R^2$ in the definition of Alexandrov angles. Thus, if $\angle_x(y,z) < \alpha$ for some $\alpha > 0$, then, using the definition of Alexandrov angles, there exist $y' \in [x,y]$ and $z' \in [x,z]$ such that in a comparison triangle $\overline{\Delta}(\overline{x},\overline{y}', \overline{z}')\subseteq M_\kappa^2$ of the geodesic triangle $\Delta(x,y',z')$, the angle at $\overline{x}$ is smaller than $\alpha$.

Suppose now that $X$ is additionally complete, and let $C \subseteq X$ be nonempty, closed, and convex. Take $x \in X$ with $\text{dist}(x,C) < D_\kappa/2$. Then $P_C(x)$ is a singleton. Moreover, if $x \notin C$, $y \in C$ with $y \ne P_C(x)$, then $\angle_{P_C(x)}(x,y) \ge \pi/2$. Furthermore, we will also use the fact that, for all $y \in C$ with $d(P_C(x),y) \le D_\kappa/2$, we have $d(P_C(x),y) \le d(x,y)$ (see, e.g., \cite[Proposition 3.5]{EspFer09}). A systematic study of properties of Chebyshev sets (i.e., sets where the metric projection is a singleton for all points in the space) in Alexandrov spaces is carried out in \cite{AriFer16}.

{\it $\CBB(\kappa)$ spaces} (or spaces that have {\it curvature bounded below by $\kappa$} in the sense of Alexandrov) are defined in a similar way to $\CAT(\kappa)$ spaces using in this case the reverse of the $\CAT(\kappa)$ inequality. As for $\CAT(\kappa)$ spaces, one can give other equivalent conditions involving, e.g., angles. Note also that in $\CBB(\kappa)$ spaces, the sum of adjacent angles is precisely equal to $\pi$. A $\CBB(\kappa)$ space is also a $\CBB(\kappa')$ space for all $\kappa' \le \kappa$.

Given $\kappa' \leq \kappa$, following the literature (see, e.g., \cite[Chapter II.9]{BN93}), we will refer to a geodesic space that is both a $\CAT(\kappa)$ space and a $\CBB(\kappa')$ space as an $\Re_{\kappa', \kappa}$ domain. The behavior of distance functions in spaces with local two-sided curvature bounds has been very recently discussed in \cite{KapLyt21}.

Curvature bounds in the sense of Alexandrov can also be considered locally by imposing the comparison condition only for triangles that are small enough. More precisely, we say that a metric space has locally curvature bounded above (resp., below) by $\kappa$ if every point in it has a neighborhood that, with the induced metric, is a $\CAT(\kappa)$ space (resp., a $\CBB(\kappa)$ space). 

If $z$ is a point in a smooth Riemannian manifold, then there exists a neighborhood of $z$ that is an $\Re_{\kappa',\kappa}$ domain for some suitable $\kappa, \kappa' \in \R$ (see, e.g., \cite[Chapter II.1, Appendix]{Bri99} and \cite{BurGroPer92}). Furthermore, bearing in mind that the injectivity radius in  a complete  Riemannian manifold is a continuous function \cite[Part I, Proposition 2.1.10]{Kli82}, there exists a ball centered at $z$ that is an $\Re_{\kappa', \kappa}$ domain with the $c$-geodesic extension property at $z$ for an appropriate $c > 0$.

The structure of $\CAT(\kappa)$ spaces satisfying a local geodesic extension property has been analyzed in \cite{LytNag19}.

\noindent {\bf Notation.} $\N$ denotes the set of natural numbers including $0$.

\section{Curvature and convexity} \label{sect-curv-conv}

As pointed out by Gromov in \cite{Gro91}, the full geometric meaning of the curvature tensor is obscure, which motivates the search for a suitable notion of curvature for a subspace of a metric space. In the case of a curve, a first approach  was made by Menger who mimicked the notion of classical curvature for plane curves. However, this definition is limited because it was  modeled after the Euclidean plane. Our approach here is the one suggested by Finsler in his Ph.D. thesis and studied by Haantjes \cite{Han47} using the comparison between the length of an arc and the distance between its endpoints. A nice introduction of different concepts of metric curvature for curves is given in \cite{Sau15}.
  
\begin{definition}\label{defi-fi-cur}
Let $(X,d)$ be a metric space, $A\subseteq X$, and $z\in A$. We say that $A$ has {\it finite extrinsic curvature} at $z$ if there exist $\sigma \ge 0$ and $r > 0$ such that
\begin{equation}\label{def-extc-eq}
d^A(p,q) - d(p,q) \le \sigma \, d(p,q)^3
\end{equation}
for all $p, q \in B(z,r) \cap A$. 
\end{definition}

The above definition is a local version of the notion of almost convexity (more precisely, $2$-convexity) introduced by Lytchak  \cite{Lyt05} in the following way: given $\sigma \ge 0$ and $\rho > 0$, a subset $A$ of $X$ is called $(\sigma,2,\rho)$-convex (or $2$-convex when there is no need to emphasize the constants $\sigma$ and $\rho$) if \eqref{def-extc-eq} holds for all $p,q \in A$ with $d(p,q) \le \rho$. It is immediate that a $(\sigma,2,\rho)$-convex set has finite extrinsic curvature at all its points. Actually, in \cite{AleBis06}, the terminology subspaces of extrinsic curvature bounded above is used for $2$-convex sets. It is also immediate that if $A$ is weakly convex, then it is $(0,2,\rho)$-convex for all $\rho > 0$ (in other words, $A$ is of extrinsic curvature $0$).

\begin{remark} \label{rmk-extc-2-convex}
If $X$ is a $\CAT(\kappa)$ space and $A \subseteq X$ is locally compact (in the subspace topology) and has finite extrinsic curvature at $z \in A$, then for all sufficiently small $R > 0$, $B(z,R) \cap A$ is $2$-convex.

Indeed, let $\sigma \ge 0$ and $r > 0$ be such that $\eqref{def-extc-eq}$ holds for all $p,q \in B(z,r) \cap A$. We can assume that $r$ is small enough so that, on $B(z,r)$, the squared distance function to a point is $(-1/2)$-convex and any two points in $B(z,r) \cap A$ are joined by a geodesic in $(A, d^A)$ (for this one uses the local compactness of $A$ and the finite extrinsic curvature at $z$). Take 
\[\alpha = \min\left\{r,\frac{1}{32\sqrt{\sigma} + 1}\right\} \quad \text{and} \quad R = \frac{\alpha}{3(\sigma + 1)}.\] 

Let $p,q \in B(z,R) \cap A$, and denote $s = d^A(p,q)$. By \eqref{def-extc-eq},
\[s \le d(p,q) + \sigma d(p,q)^3 < 2R(\sigma+1).\]
Let $f : [0,s] \to A$ be a geodesic in $(A, d^A)$ with $f(0) = p$ and $f(s) = q$. If $t \in [0,s]$, then
\[d(z,f(t)) \le d(z,p) + d(p,f(t)) < R + d^A(p,f(t)) = R + t \le R + s < R + 2R(\sigma+1) < \alpha,\]
so $f(t) \in B(z,\alpha) \cap A$.

The function $g : B(z,\alpha)  \to \R$ defined by $g(x) = \frac{1}{2}d(x,z)^2$ is $\alpha$-Lipschitz as
\begin{align*}
\left|g(x) - g(y)\right|& = \frac{1}{2} \cdot \left(d(x,z) + d(y,z)\right)\cdot \left|d(x,z) - d(y,z)\right| \le \alpha d(x,y)
\end{align*}
for all $x,y \in B(z,\alpha)$. 

Let $t_1, t_2 \in [0,s]$, and denote $m = \frac{1}{2}f(t_1) + \frac{1}{2}f(t_2)$ and $m' = f(\frac{t_1 + t_2}{2})$. We have
\begin{align*}
d(f(t_1),m') \le d^A(f(t_1),m') = \frac{1}{2}d^A(f(t_1), f(t_2)) \le \frac{1}{2}d(f(t_1),f(t_2))\left(1 + \sigma d(f(t_1),f(t_2))^2\right).
\end{align*}
Likewise, $d(f(t_2),m') \le \frac{1}{2}d(f(t_1),f(t_2))(1 + \sigma d(f(t_1),f(t_2))^2)$. Assuming $\alpha$ is sufficiently small, we get $d(m,m') \le 2\sqrt{\sigma}d(f(t_1),f(t_2))^2$.

Since $g$ is $(-1/4)$-convex, we have 
\begin{align*}
g(m') & \le g(m) + \alpha d(m,m') \\
& \le \frac{1}{2}(g \circ f)(t_1) + \frac{1}{2}(g \circ f)(t_2) - \frac{1}{16}d(f(t_1),f(t_2))^2 + 2\alpha\sqrt{\sigma}d(f(t_1),f(t_2))^2.
\end{align*}

As $\alpha \le 1/(32\sqrt{\sigma} + 1)$, it follows that $g \circ f$ is convex. This shows that $\Img f \subseteq B(z,R) \cap A$, which implies that $B(z,R) \cap A$ is $(\sigma,2,2R)$-convex.

Note that intersections of $A$ with sufficiently small closed balls are $2$-convex as well.
\end{remark}

We introduce next the notion of uniform approximation by geodesics which will be one of our main tools to analyze super-regularity in the setting of geodesic spaces.

\begin{definition} 
Let $(X,d)$ be a metric space and $A\subseteq X$. We say that the set $A$ is  {\it  uniformly approximable  by geodesics $(\UAG)$}  at $z \in A$ if for all $\eps > 0$ there exists $R > 0$ such that for every $x, x' \in B(z,R) \cap A$ with $x \ne x'$ there exist $f : [0,l] \to A$, with $f(0) = x$ and $f(l) = x'$, and a geodesic $\gamma : [0,l] \to X$ starting at $x$ with the property that
\begin{equation}\label{prop-p-diff-eq}
\frac{d(\gamma(t),f(t))}{t} < \eps \quad \text{for all } t \in (0,l].
\end{equation}

\end{definition}

Every weakly convex subset of a geodesic space is $\UAG$ at all its points. However, convexity is not essential for $\UAG$ to hold. We prove next that if a set has finite extrinsic curvature at a given point, then it is $\UAG$ at that point. 

\begin{proposition}\label{prop-subset-extc}
Let $X$ be a $\CAT(\kappa)$ space and $A \subseteq X$ be locally compact (in the subspace topology). If $A$ has finite extrinsic curvature at $z \in A$, then $A$ is $\UAG$ at $z$.
\end{proposition}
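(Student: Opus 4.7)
The plan is to combine the midpoint estimate from the proof of Remark~\ref{rmk-extc-2-convex} with a dyadic induction. Fix $\eps>0$. By Remark~\ref{rmk-extc-2-convex} (shrinking the radius as needed) I would choose $R_0>0$ so that $B(z,2R_0)\cap A$ is $(\sigma,2,4R_0)$-convex, any two points of $B(z,R_0)\cap A$ are joined by a $d^A$-geodesic inside $B(z,2R_0)\cap A$, and on $B(z,2R_0)$ the $\CAT(\kappa)$ midpoint stability
\[d\bigl(\mathrm{mid}_X(a,b),\mathrm{mid}_X(a',b')\bigr)\le (1+\eta)\cdot\tfrac12\bigl(d(a,a')+d(b,b')\bigr)\]
holds with $\eta>0$ arbitrarily small (via $2$-uniform convexity of small-diameter $\CAT(\kappa)$ balls); a suitable $R\in(0,R_0)$ would be chosen at the very end.

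Given distinct $x,x'\in B(z,R)\cap A$, I would put $\ell=d(x,x')$, $L=d^A(x,x')\le\ell+\sigma\ell^3$, let $\tilde f:[0,L]\to A$ be a unit-speed $d^A$-geodesic from $x$ to $x'$, and let $\gamma:[0,\ell]\to X$ be the $X$-geodesic from $x$ to $x'$. Taking $l=\ell$ and $f(t):=\tilde f(tL/\ell)$ on $[0,\ell]$ gives $f(0)=x$ and $f(\ell)=x'$. I aim to establish
\[d(\gamma(t),f(t))\le 8\sqrt\sigma\,t(\ell-t)\quad\text{for all }t\in[0,\ell],\]
which immediately implies $d(\gamma(t),f(t))/t\le 8\sqrt\sigma(\ell-t)\le 8\sqrt\sigma\,\ell$, so that choosing $R<\eps/(8\sqrt\sigma)$ yields the $\UAG$ property.

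I would prove the estimate by induction on $n$ at the dyadic points $t=k\ell/2^n$, $k\in\{0,\ldots,2^n\}$; the base case $n=0$ is trivial. For the step, put $p_j:=f(j\ell/2^n)=\tilde f(jL/2^n)$, and note that $f((2k+1)\ell/2^{n+1})$ is the $d^A$-midpoint of $p_k$ and $p_{k+1}$, while $\gamma((2k+1)\ell/2^{n+1})$ is the $X$-midpoint of $\gamma(k\ell/2^n)$ and $\gamma((k+1)\ell/2^n)$. The midpoint inequality in the proof of Remark~\ref{rmk-extc-2-convex} bounds the $d$-distance between the $d^A$-midpoint of $p_k,p_{k+1}$ and their $X$-midpoint $m_k^X$ by $2\sqrt\sigma\,d(p_k,p_{k+1})^2\le 2\sqrt\sigma\,(L/2^n)^2$, while midpoint stability bounds $d(m_k^X,\gamma((2k+1)\ell/2^{n+1}))$ by $\tfrac12(D_k^{(n)}+D_{k+1}^{(n)})$, with $D_j^{(n)}:=d(p_j,\gamma(j\ell/2^n))$. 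Combining these via the triangle inequality and plugging in the inductive hypothesis, a short algebraic simplification reproduces the bound at level $n+1$ with the same constant $8$; density of the dyadic rationals together with continuity then extends the estimate to all $t\in[0,\ell]$.

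The main obstacle I anticipate is bookkeeping the small multiplicative errors entering at each level: the factor $(1+\eta)$ from $\CAT(\kappa)$ midpoint stability, the $L$-versus-$\ell$ discrepancy $L-\ell\le\sigma\ell^3$, and residual $\kappa$-corrections. Each of these is $O(\sigma\ell^2+\kappa\ell^2)$, and their cumulative effect over the $n$ inductive levels is of the same order; this slightly inflates the final constant $8$ but preserves the asymptotic $O(\sqrt\sigma\,\ell)$ bound, so it can be absorbed by further shrinking $R$.
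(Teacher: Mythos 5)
Your overall strategy --- a direct dyadic midpoint induction on $d(f(t),\gamma(t))$ --- is genuinely different from the paper's proof, which instead controls the scalar function $h(t)=\mathrm{dist}(f(t),[x,x'])$ (shown to be $C$-convex because the distance to a \emph{fixed convex segment} is $\lambda'$-convex on small balls, a standing fact from the preliminaries) and then separately bounds the along-the-segment discrepancy $d(P_{[x,x']}(f(t)),\gamma(t))$ using the comparison between $d^A$ and $d$. Your route, if it worked, would be more direct, since it dispenses with the projection step. However, there is a genuine gap in the inductive step, and it sits exactly at the point the paper's detour is designed to avoid.

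The problem is the ``midpoint stability'' inequality $d(\mathrm{mid}_X(a,b),\mathrm{mid}_X(a',b'))\le(1+\eta)\cdot\frac12(d(a,a')+d(b,b'))$. First, this is not what $2$-uniform convexity gives you: in a $\CAT(\kappa)$ space with $\kappa>0$ the function $t\mapsto d(\gamma(t),\gamma'(t))$ along two proportionally parametrized geodesics is not convex, and the correct perturbed statement carries an \emph{additive} error, quadratic in the separation of the endpoints of the two geodesics, not a multiplicative factor $(1+\eta)$ on the average. Second, and decisively, even if you grant the multiplicative form, the induction does not close. At level $n\to n+1$ the concavity of $\phi(t)=t(\ell-t)$ buys you a gain of $8\sqrt{\sigma}\,(\ell/2^{n+1})^2$, which decays geometrically in $n$, whereas the loss from the factor $(1+\eta)$ is $\eta\cdot\frac12(D_k^{(n)}+D_{k+1}^{(n)})$, which by the inductive hypothesis is only bounded by $2\eta\sqrt{\sigma}\,\ell^2$ and does \emph{not} decay with $n$. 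For $n$ large the loss dominates the gain no matter how small $\eta$ is chosen (it is fixed once the ball is fixed), so the bound $8\sqrt{\sigma}\,t(\ell-t)$ --- or any bound of the form $K\sqrt{\sigma}\,t(\ell-t)$ with fixed $K$ --- cannot be propagated through the dyadic levels. To repair the argument you would need a midpoint comparison with an additive error of order $(\ell/2^n)^2$ (i.e.\ a genuine approximate-convexity statement for $t\mapsto d(f(t),\gamma(t))$, with a constant depending on $\kappa$ and absorbed by shrinking $R$); establishing that in positive curvature is precisely the technical difficulty, and it is why the paper replaces the moving target $\gamma(t)$ by the fixed convex set $[x,x']$, for which $\lambda'$-convexity of the distance function is available, and then handles the tangential error $d(P_{[x,x']}(f(t)),\gamma(t))\le 4\sigma R^2 t+h(t)$ by a separate elementary estimate. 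A further, smaller issue: the final constant in your bound should also reflect the curvature contribution (it cannot be $8\sqrt{\sigma}$ alone, uniformly in $\kappa$), though this is absorbed once $R$ is allowed to depend on $\kappa$ as well.
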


\begin{proof} 
This result improves a previous one from an earlier version of the paper. This improved statement and its proof are based on ideas communicated to us by Alexander Lytchak.

Let $\eps \in (0,1)$. From the proof of Remark \ref{rmk-extc-2-convex} we know that we can take $R > 0$ small enough so that each two points in $B(z,R) \cap A$ can be joined by a geodesic in $(A,d^A)$ with its image contained in $B(z,R) \cap A$ and $B(z,R) \cap A$ is $(\sigma, 2, 2R)$-convex for some $\sigma \ge 0$. Moreover, we can assume that
\[R \le \min\left\{\frac{1}{\sigma + 1}, \frac{\eps}{64(\sqrt{\sigma}+1)}\right\}\]
and that in $B(z,R)$, the distance function to geodesic segments is $1$-convex.

Let $x, x' \in B(z,R) \cap A$ with $x \ne x'$. Denote $l = d(x,x')$ and $s = d^A(x,x')$. Consider $\gamma : [0,l] \to X$ the geodesic from $x$ to $x'$ and a constant-speed geodesic $f : [0,l] \to B(z,R) \cap A$ in $(A,d^A)$ from $x$ to $x'$ satisfying 
\[d^A(f(t_1),f(t_2)) = \frac{s}{l}|t_1 - t_2|\] 
for all $t_1, t_2 \in [0,l]$.

The function $g : B(z, R)  \to \R$ defined by $g(y) = \text{dist}(y,[x,x'])$ is $1$-Lipschitz and $1$-convex. Denote $h = g \circ f$. As in the proof of Remark \ref{rmk-extc-2-convex}, one obtains that $h$ is $2(8\sqrt{\sigma} + 1)$-convex.

\begin{claim} \label{prop-subset-extc-claim}
For all $t \in [0,l]$, $h(t) \le 4R(8\sqrt{\sigma}+1)t$.
\end{claim}
\begin{proof}[Proof of Claim]
Note that $h(0)=h(l)=0$. Denote $C = 2(8\sqrt{\sigma}+1)$. One can easily verify by induction that
\begin{equation} \label{prop-subset-extc-eq1}
h\left(\frac{l}{2^n}\right) \le l C \left(1 - \frac{1}{2^n}\right)\frac{l}{2^n}
\end{equation}
for all $n \in \N$.  

Let $t \in (0,l]$. Then $t = (1-\alpha)\frac{l}{2^{n+1}} + \alpha \frac{l}{2^n}$ for some $n \in \N$ and $\alpha \in (0,1]$. As $h$ is $C$-convex,
\begin{align*}
h(t) & \le (1-\alpha)h\left(\frac{l}{2^{n+1}}\right) + \alpha h\left(\frac{l}{2^n}\right) + C \alpha (1-\alpha) \left(\frac{l}{2^{n+1}}\right)^2\\
& \le l C\left(t - (1-\alpha)\frac{l}{2^{2n+2}} - \alpha \frac{l}{2^{2n}} + \alpha (1-\alpha)\frac{l}{2^{2n+2}}\right) \quad \text{by } \eqref{prop-subset-extc-eq1}\\
& \le l C t \le 2RCt.
\end{align*}
\end{proof}

Let $t \in (0,l]$, and denote $p(t) = P_{[x,x']}(f(t))$. Then $d(f(t),p(t)) = h(t)$.

Suppose $\gamma(t) \in [x,p(t)]$. Then
\begin{align*}
d(p(t), \gamma(t)) &= d(x,p(t)) - t \le d(x,f(t)) + d(f(t),p(t)) - t \le d^A(x,f(t)) + d(f(t),p(t)) -t \\
& = \left(\frac{s}{l} - 1\right)t + d(f(t),p(t)) \le \sigma l^2 t + d(f(t),p(t)) \le 4\sigma R^2t + h(t).
\end{align*}

Suppose now $\gamma(t) \in [p(t),x']$. Then
\begin{align*}
d(p(t), \gamma(t)) &= t - d(x,p(t)) \le t - d(x,f(t)) + d(f(t),p(t)).
\end{align*}
Since 
\[t \le  d^A(x,f(t)) \le d(x,f(t))\left(1 + \sigma d(x,f(t))^2\right) \le d(x,f(t))\left(1 + 4\sigma R^2\right),\]
it follows that 
\begin{align*}
d(p(t), \gamma(t)) & \le \left(1 - \frac{1}{1 + 4\sigma R^2}\right)t + d(f(t),p(t)) \le 4\sigma R^2 t + h(t).
\end{align*}
Thus, in both cases,
\begin{align*}
d(f(t), \gamma(t)) &\le d(f(t),p(t)) + d(p(t), \gamma(t)) \le 4\sigma R^2 t + 2h(t)\\
&  \le 4R(\sigma R + 2(8\sqrt{\sigma} + 1)) t < 64R(\sqrt{\sigma} + 1)t \le \eps t.
\end{align*}

\end{proof}

\begin{remark}
Note that the upper bound on the curvature in Proposition \ref{prop-subset-extc} is only needed locally around $z$, so, in particular, smooth Riemannian manifolds provide a suitable setting for the space $X$. By \cite[Theorem 1.2]{Lyt05}, if $A$ is a compact subset of a smooth Riemannian manifold and there exist a number $C > 0$ and a neighborhood of $z$ where every two points at distance $s$ can be joined in $A$ by an arc length parameterized $C^{1,1}$ curve (i.e., $C^1$ with a locally Lipschitz derivative) of length smaller than $C s$, whose $C^{1,1}$ norm is bounded by $C$, then $A$ has finite extrinsic curvature at all its points and hence is $\UAG$ at all its points.
\end{remark}

An interesting question in optimization and control theory is when the image of a ball under a smooth function is a convex set (see, e.g.,  \cite{Polyak01}). In what follows we address a question in this line from the point of view of uniform approximation by geodesics and study the image of a convex set under a sufficiently smooth function.

\begin{proposition}\label{prop-conv-img-AC}
Let $C \subseteq \R^n$ be convex and $u \in C$. Suppose that $F: \R^n \to\R^m $ is differentiable on a neighborhood of $u$, $DF$ is continuous at $u$, and $DF(u)$ is injective. Furthermore, assume that $F$ is a homeomorphism between $C$ and $F(C)$. Then $F(C)$ is $\UAG$ at $F(u)$.
\end{proposition}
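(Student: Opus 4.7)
The plan is to use the convexity of $C$ to transport the straight segment from $x$ to $x'$ (preimages of $y$ and $y'$ in $F(C)$) via $F$, and then show that this image curve is quantitatively close to the straight segment joining $y$ and $y'$ in $\R^m$. The approximation error will come from the first-order Taylor expansion controlled by continuity of $DF$ at $u$, while the lower bound guaranteeing that we can divide by $l = |y - y'|$ will come from injectivity of $DF(u)$.

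First I would fix $\eps \in (0,1)$ and, using injectivity of $DF(u)$ in finite dimensions, let $c > 0$ be the smallest singular value, so that $|DF(u)v| \ge c|v|$ for every $v \in \R^n$. Choose $\eta \in (0,c)$ small enough that $2\eta/(c - \eta) < \eps$. By continuity of $DF$ at $u$, pick $r > 0$ such that $\|DF(w) - DF(u)\| \le \eta$ on the open ball $B(u,r)$; then for any $x, x' \in B(u,r)$, the segment $[x,x']$ lies in $B(u,r)$, and the mean value inequality yields
\[|F(x') - F(x) - DF(u)(x' - x)| \le \eta \, |x' - x|.\]
Combining with the lower bound on $DF(u)$ gives the bi-Lipschitz estimates $(c - \eta)|x' - x| \le |F(x') - F(x)| \le (\|DF(u)\| + \eta)|x' - x|$ on $B(u,r)$.

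Next I would use the homeomorphism hypothesis: since $F^{-1}: F(C) \to C$ is continuous at $F(u)$ with value $u$, there exists $R > 0$ such that $F^{-1}(B(F(u), R) \cap F(C)) \subseteq B(u,r) \cap C$. Now take any distinct $y, y' \in B(F(u), R) \cap F(C)$, and let $x = F^{-1}(y)$, $x' = F^{-1}(y')$ in $B(u,r) \cap C$, and $l = |y - y'|$. Define
\[\phi(t) = x + \frac{t}{l}(x' - x), \qquad f(t) = F(\phi(t)), \qquad \gamma(t) = y + \frac{t}{l}(y' - y), \quad t \in [0,l].\]
Convexity of $C$ ensures $\phi(t) \in C$, so $f: [0,l] \to F(C)$ is a well-defined curve with $f(0) = y$, $f(l) = y'$, and $\gamma$ is the geodesic from $y$ to $y'$ in $\R^m$.

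The key computation is then
\[f(t) - \gamma(t) = \bigl[F(\phi(t)) - F(x) - \tfrac{t}{l}DF(u)(x' - x)\bigr] - \tfrac{t}{l}\bigl[F(x') - F(x) - DF(u)(x' - x)\bigr].\]
Applying the Taylor estimate to each bracket (both arguments lie in $B(u,r)$) gives
\[|f(t) - \gamma(t)| \le \eta \, \tfrac{t}{l}|x' - x| + \tfrac{t}{l}\, \eta \,|x' - x| = 2\eta \, \tfrac{t}{l}|x' - x|.\]
The lower bound $l = |F(x') - F(x)| \ge (c - \eta)|x' - x|$ yields $|x' - x|/l \le 1/(c - \eta)$, so
\[\frac{d(\gamma(t), f(t))}{t} \le \frac{2\eta}{c - \eta} < \eps \quad \text{for all } t \in (0,l],\]
which is precisely the UAG property at $F(u)$.

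The only step requiring any real care is the passage from a neighborhood of $F(u)$ in $F(C)$ back to a neighborhood of $u$ in $C$ where the linearization and bi-Lipschitz bounds hold; this is exactly where the hypothesis that $F$ restricts to a homeomorphism between $C$ and $F(C)$ enters, and it is the assumption that cannot be dropped (merely having $DF(u)$ injective would not let us control how preimages behave). Everything else is a standard first-order expansion argument.
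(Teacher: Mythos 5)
Your proof is correct. It follows the same overall strategy as the paper's: use the homeomorphism hypothesis to pull a neighborhood of $F(u)$ in $F(C)$ back into a small ball $B(u,r)\cap C$ where $DF$ is uniformly close to $DF(u)$, push the straight segment in $C$ forward by $F$ to get the curve $f$, and control $d(f(t),\gamma(t))/t$ by a first-order expansion, with injectivity of $DF(u)$ supplying the lower bound needed to divide by the parameter. The one genuine difference is the choice of comparison geodesic: the paper takes $\gamma$ to be the ray emanating from $F(y)$ in the tangent direction $DF(y)(v)$ (so $\gamma$ need not terminate at the second point, which the $\UAG$ definition permits), and then the error $f(t)-\gamma(t)$ is estimated directly by a componentwise mean value theorem comparing $\nabla F_i$ at intermediate points with $\nabla F_i$ at $y$; you instead take $\gamma$ to be the chord joining the two image points, which forces the two-bracket Taylor decomposition and the additional bi-Lipschitz lower bound $l\ge (c-\eta)\lvert x'-x\rvert$ on chords rather than just on the derivative. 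Your variant costs one extra estimate but yields the slightly cleaner conclusion that the approximating geodesic is the one actually joining the two points; both arguments are equally valid and use the hypotheses in the same places.
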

\begin{proof}

Denote $z=F(u)$, $F= (F_1 ,\ldots, F_m)$, and let $U$ be a neighborhood of $u$ where $F$ is differentiable. As $DF(u)$ is injective, there exists $K>0$ such that $\| DF(u)(v)\| \geq K$ for all unit vectors $v \in \R^n$. Let $\eps > 0$, and take $r > 0$ such that, for all $ y\in B(u, r)  \subseteq U$,
\[\Vert DF(y)(v)\Vert \geq \frac{K}{2} \quad \text{for all unit vectors } v \in \R^n\]
and
\[\Vert \nabla F_i (y) -  \nabla F_i (u) \Vert < {K\eps \over 4 m} \quad \text{for all } i \in \{1,\ldots, m\}.\]

Let $R>0$ such that $B(z, R) \cap F(C) \subseteq F(B(u, r) \cap C)$. Fix $x, x' \in B(z, R) \cap F(C)$ with $x \ne x'$. Then $x= F(y), x' = F(y')$ for some $y, y' \in  B(u, r) \cap C$. Take
\[v = \frac{y'-y}{\Vert y'-y \Vert}, \quad w =DF(y)(v), \quad \text{and} \quad l=\|y'-y\|\cdot \|w\|.\]
Define $f : [0,l] \to F(C)$ and  a geodesic $\gamma : [0,l] \to \R^m$ by 
\[f(t) = F\left(y + t \frac{ v}{\Vert w\Vert}\right) \quad \text{and} \quad \gamma (t) = F(y) + t \frac{ w}{ \Vert w\Vert}.\]
Fix $t \in (0,l]$. Then there exist $c_1, \ldots , c_m \in B(u,r)$ such that 
\[F\left( y + t \frac{ v}{ \Vert w\Vert}\right)- F(y) = \frac{t}{\Vert w\Vert} \left( \langle\nabla F_1 (c_1 ), v\rangle, \ldots, \langle\nabla F_m (c_m ), v\rangle \right).\]
Consequently,
\begin{align*}
\frac{\Vert f(t) -\gamma(t) \Vert}{t}  &= \frac{1}{\Vert w\Vert}  \left\Vert \left(\langle\nabla F_1 (c_1)-\nabla F_1(y), v\rangle,\ldots , \langle\nabla F_m(c_m) -
\nabla F_m(y), v\rangle\right)\right\Vert \\
& \le \frac{1}{ \Vert w\Vert} \sum_{i=1}^m \left(\Vert \nabla F_i (c_i)-\nabla F_i (u) \Vert + \Vert \nabla F_i (u)-\nabla F_i (y)) \Vert \right)\\
& <  \frac{2}{ K}   m \left(\frac{K \eps}{ 4 m} + \frac{K \eps}{ 4 m}\right)= \eps. 
 \end{align*}
\end{proof}

\begin{corollary}\label{cor-conv-img-AC}
Let $C \subseteq \R^n$ be convex and $u \in C$. Suppose that $F: \R^n \to\R^m $ is differentiable on a neighborhood of $u$, $DF$ is continuous at $u$, and $DF(u)$ is injective. Then for any sufficiently small $r > 0$, $F(B(u,r) \cap C)$ is $\UAG$ at $F(u)$.
\end{corollary}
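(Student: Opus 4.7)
The plan is to reduce the corollary to Proposition \ref{prop-conv-img-AC} by replacing the convex set $C$ with $B(u, r) \cap C$ for a suitable $r > 0$. Since $B(u, r) \cap C$ is the intersection of two convex sets it is convex, and it contains $u$; the only nontrivial hypothesis of Proposition \ref{prop-conv-img-AC} to verify is that $F$ restricted to this set is a homeomorphism onto its image.

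To obtain this, I would first establish that $F$ is bi-Lipschitz on a ball around $u$. Since $DF(u)$ is injective, there exists $K > 0$ such that $\|DF(u)(v)\| \ge K\|v\|$ for all $v \in \R^n$. By continuity of $DF$ at $u$, one can choose $r_0 > 0$ such that $B(u, r_0)$ lies in the differentiability domain of $F$ and the operator norm satisfies $\|DF(y) - DF(u)\| < K/2$ for all $y \in B(u, r_0)$. For $y, y' \in B(u, r_0)$, using that the segment $[y, y']$ lies in $B(u, r_0)$ and writing
\[F(y') - F(y) = DF(u)(y' - y) + \int_0^1 \bigl[DF(y + t(y' - y)) - DF(u)\bigr](y' - y)\, dt,\]
one obtains by the triangle inequality $\|F(y') - F(y)\| \ge (K/2)\|y' - y\|$. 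Combined with the straightforward upper Lipschitz bound (coming from $\|DF\|$ being bounded near $u$), this shows $F|_{B(u, r_0)}$ is bi-Lipschitz, hence a homeomorphism onto its image.

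It then follows that for every $r \le r_0$, the restriction $F|_{B(u, r) \cap C}$ is a homeomorphism onto $F(B(u, r) \cap C)$. Applying Proposition \ref{prop-conv-img-AC} with the convex set $B(u, r) \cap C$ and the map $F$ (whose hypotheses on differentiability, continuity of the derivative, and injectivity of $DF(u)$ are inherited verbatim) yields that $F(B(u, r) \cap C)$ is $\UAG$ at $F(u)$. The argument is essentially routine; the one step worth taking care with is the mean-value estimate establishing local bi-Lipschitz behavior, which is the only place the injectivity hypothesis on $DF(u)$ is exploited quantitatively.
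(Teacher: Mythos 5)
Your proposal is correct and follows essentially the same route as the paper: both establish the lower bound $\|F(y')-F(y)\|\ge (K/2)\|y'-y\|$ on a small ball from the continuity of $DF$ at $u$ and the injectivity constant $K$, conclude that $F$ is a homeomorphism onto its image there, and then invoke Proposition \ref{prop-conv-img-AC} with the convex set $B(u,r)\cap C$. The only cosmetic difference is that you use the integral (or operator-norm mean-value) form of the remainder estimate where the paper applies the scalar mean value theorem componentwise; since $DF$ is merely assumed continuous at $u$, the componentwise version (or the mean value inequality) avoids any integrability question for $t\mapsto DF(y+t(y'-y))$, but either way the estimate goes through.
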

\begin{proof}
Denote $F= (F_1 ,\ldots, F_m)$, and let $U$ be a neighborhood of $u$ where $F$ is differentiable. As $DF(u)$ is injective, there exists $K>0$ such that $\|DF(u)(v)\| \geq K\|v\|$ for all $v \in \R^n$. From the continuity of $DF$ at $u$, there exists $r > 0$ such that, for all $y \in B(u,r) \subseteq U$, 
\[\Vert \nabla F_i (y) -  \nabla F_i (u) \Vert \le {K \over 2 m} \quad \text{for all } i \in \{1,\ldots, m\}.\]
Let $y,y' \in B(u,r)$. Then there exist $c_1, \ldots , c_m \in B(u,r)$ such that 
\[F(y')- F(y) = (\langle\nabla F_1 (c_1 ), y'-y\rangle, \ldots, \langle\nabla F_m (c_m ), y'-y\rangle).\]
Consequently,
\[ \|F(y') - F(y) - DF(u)(y'-y)\| \le \sum_{i = 1}^m \|\nabla F_i(c_i) - \nabla F_i(u)\|\cdot \|y'-y\| \le \frac{K}{2}\|y'-y\|,\]
from where 
\begin{equation}\label{cor-conv-img-AC-eq1}
\|F(y') - F(y)\| \ge \|DF(u)(y'-y)\| - \frac{K}{2}\|y'-y\| \ge \frac{K}{2}\|y'-y\|.
\end{equation}

Let $r' \le r$. By \eqref{cor-conv-img-AC-eq1}, $F$ is injective on $B(u,r')$. Moreover, $F^{-1} : F(B(u,r') \cap C) \to B(u,r') \cap C$ is continuous since, using \eqref{cor-conv-img-AC-eq1}, for all $x,x' \in F(B(u,r') \cap C)$, 
\[\|F^{-1}(x) - F^{-1}(x')\|  \le \frac{2}{K}\|x-x'\|.\]
Apply Proposition \ref{prop-conv-img-AC} for the convex set $B(u,r') \cap C$.
\end{proof}

We provide next an example of a set that is $\UAG$ at a point but does not have finite extrinsic curvature there.

\begin{example}
By Proposition \ref{prop-conv-img-AC} applied for the function $F : \R \to \R^2$, $F(x) = (x,|x|^{3/2})$, the set $A = \Img F$
is $\UAG$ at $(0,0)$.
 
For $\eps > 0$, the points $p_\eps= (-\varepsilon, \varepsilon^{3/2}), q_\eps = (\varepsilon, \varepsilon^{3/2}) \in \Img F$ satisfy
\[\lim_{\eps \to 0} {d^{A}(p_\eps,q_\eps) - d(p_\eps,q\eps) \over d(p_\eps,q_\eps)^3} = \infty;\] 
hence $A$ does not have finite extrinsic curvature at $(0,0)$.
\end{example}

Recently, much attention has been given to the epigraph of a certain class of functions that are close to being convex in the sense defined below. 
  
Let $X$ be a geodesic space and $f : X \to (-\infty,\infty]$. The (effective) {\it domain} of $f$ is the set $\dom f = \{x \in X \mid f(x) < \infty\}$, and the {\it epigraph} of $f$ is defined by 
\[\epi f = \{ (x,\lambda) \in \dom f \times \R \mid \lambda \ge f(x)\}.\] 

Following \cite{NLT00}, we say that $f$ is {\it approximately convex} at $z \in X$ if for all $\eps > 0$ there exists $r > 0$ such that for all $x,x' \in B(z,r)$, any geodesic $\gamma : [0,l] \to X$ from $x$ to $x'$, and all $t \in [0,1]$ we have
\[f(\gamma(lt)) \le (1-t)f(x) + t f(x') + \eps t(1-t)d(x,x').\]
Furthermore, we say that $f$ is approximately convex if it is approximately convex at every $z \in X$.

Note that if balls in $X$ are convex and for all $\lambda > 0$, the restriction of $f$ to a ball centered at $z$ if $\lambda$-convex, then $f$ is approximately convex at $z$.

\begin{proposition}\label{prop-ac-uag}
Let $X$ be a geodesic space, $f : X \to (-\infty,\infty]$ be a function that is approximately convex at $z \in X$, and $\alpha \in \R$ such that $\alpha \ge f(z)$. Then $\epi f$ is $\UAG$ at $(z,\alpha)$.
\end{proposition}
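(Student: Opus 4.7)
The plan is to work in the ambient product space $X \times \R$ (which is geodesic since $X$ is) and, given two nearby points of $\epi f$, take as $\gamma$ the natural product geodesic between them and construct a competitor curve $\phi$ in $\epi f$ that shares the same first coordinate but adds a small quadratic bump in the second coordinate---just large enough to dominate the slack in approximate convexity, and small enough to keep the ratio of distance to arclength below $\eps$.

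Given $\eps > 0$, I would first invoke approximate convexity of $f$ at $z$ to obtain $R > 0$ such that, for all $x_1, x_2 \in B(z, R)$, every geodesic $\gamma_0 : [0, r] \to X$ from $x_1$ to $x_2$ (with $r = d(x_1, x_2)$), and every $u \in [0, 1]$,
$$f(\gamma_0(ur)) \le (1-u) f(x_1) + u f(x_2) + \eps\, u(1-u)\, r.$$
This same $R$ will serve in the $\UAG$ definition at $(z, \alpha)$, since any $(x,\lambda) \in B((z,\alpha), R)$ automatically has $x \in B(z, R)$.

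Now for distinct $(x_1, \lambda_1), (x_2, \lambda_2) \in B((z,\alpha), R) \cap \epi f$, I would set $r = d(x_1, x_2)$ and $l = \sqrt{r^2 + (\lambda_2 - \lambda_1)^2} = d_2((x_1,\lambda_1),(x_2,\lambda_2))$, pick any geodesic $\gamma_0 : [0, r] \to X$ from $x_1$ to $x_2$ (constant if $r = 0$), and let $\gamma : [0, l] \to X \times \R$ be the product geodesic described in the preliminaries. Define
$$\phi(t) = \Bigl(\gamma_0(rt/l),\ \lambda_1 + (t/l)(\lambda_2 - \lambda_1) + \eps\, (t/l)(1 - t/l)\, r\Bigr).$$
Writing $u = t/l$ and combining $\lambda_i \ge f(x_i)$ with approximate convexity,
$$\lambda_1 + u(\lambda_2 - \lambda_1) + \eps u(1-u) r \ge (1-u) f(x_1) + u f(x_2) + \eps u(1-u) r \ge f(\gamma_0(ur)),$$
so $\phi(t) \in \epi f$; the conditions $\phi(0) = (x_1, \lambda_1)$ and $\phi(l) = (x_2, \lambda_2)$ are immediate, and $\phi$ is continuous.

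Since $\gamma$ and $\phi$ agree in the first coordinate, $d_2(\gamma(t), \phi(t)) = \eps\, u(1-u)\, r$, so for $t \in (0, l]$,
$$\frac{d_2(\gamma(t), \phi(t))}{t} = \eps (1-u)\, \frac{r}{l} < \eps,$$
using $r \le l$ and $u \in (0, 1]$ (with the endpoint $u = 1$ giving $0 < \eps$). This verifies $\UAG$ at $(z, \alpha)$. I do not anticipate a genuine obstacle: the entire argument rests on matching the coefficient of the bump $\eps u(1-u) r$ precisely to the slack in the approximate-convexity inequality, so that $\phi$ lands in the epigraph while the bump stays below $\eps$ times the arclength.
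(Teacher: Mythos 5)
Your construction is exactly the paper's: the same product geodesic $\gamma$, the same competitor curve obtained by adding the quadratic bump $\eps\,(t/l)(1-t/l)\,d(x_1,x_2)$ to the second coordinate, the same verification via approximate convexity that the curve stays in $\epi f$, and the same final estimate $\eps(1-t/l)\,r/l < \eps$. The proof is correct and matches the paper's argument essentially line for line.
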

\begin{proof}
Let $\eps > 0$. Then we find $r > 0$ given by approximate convexity at $z$. Take $(x,\lambda),(x', \lambda') \in B((z,\alpha),r) \cap \epi f$ with $(x,\lambda) \ne (x',\lambda')$. Note that $x,x' \in B(z,r)$. Let $l_1 = d(x,x')$, $l_2 = |\lambda - \lambda'|$, $c_1 : [0,l_1] \to X$ be a geodesic from $x$ to $x'$, and define $c_2 : [0,l_2] \to \R$,  $c_2(s) = (1 - s/l_2)\lambda + (s/l_2)\lambda'$. Take $l = \sqrt{l_1^2 + l_2^2}$, and define $g : [0,l] \to \epi f$ by
\[g(t) = \left(c_1(tl_1/l), c_2(tl_2/l) + \eps \frac{t}{l}\left(1- \frac{t}{l}\right)d(x,x')\right).\]
The function $g$ is indeed well-defined since for all $t \in [0,l]$,
\begin{align*}
f(c_1(tl_1/l)) & \le \left(1-\frac{t}{l}\right)f(x) + \frac{t}{l}f(x') +  \eps \frac{t}{l}\left(1- \frac{t}{l}\right)d(x,x')\\
& \le \left(1-\frac{t}{l}\right)\lambda + \frac{t}{l}\lambda' +  \eps \frac{t}{l}\left(1- \frac{t}{l}\right)d(x,x') = c_2(tl_2/l) + \eps \frac{t}{l}\left(1- \frac{t}{l}\right)d(x,x').
\end{align*}
Let now the geodesic $\gamma : [0,l] \to X \times \R$ be given by $\gamma(t)=  \left(c_1(tl_1/l), c_2(tl_2/l)\right)$. Then for all $t \in (0,l]$,
\[\frac{d_2(g(t),\gamma(t))}{t} = \eps\left(1- \frac{t}{l}\right) \frac{d(x,x')}{l} < \eps.\]
\end{proof}

In the analysis of nonconvex nondifferentiable problems, the class of lower-$C^1$ functions plays an important role; see \cite{SP81}. A locally Lipschitz function $f:U \to \R$, where $U$ is an open subset of $\R^n$, is called {\it lower-$C^1$} if for every $x_0\in U$ there exist a neighborhood $V$ of $x_0$, a compact set $S$, and a jointly continuous function $g: V\times S\to \R$ such that for all $x\in V$ we have $ f(x) =\max_{x\in S} g(x,s) $ and the derivative of $g$ with respect to $x$ (exists and) is jointly continuous.
In \cite[Corollary 3]{DaGe04} it is established that a locally Lipschitz function  is approximately convex if and only if it is lower-$C^1$.  This fact and Proposition \ref{prop-ac-uag} directly yield the following result.

\begin{corollary}
Let $U \subseteq \R^n$ be open, $f : U \to \R$ be a lower-$C^1$ function,  $z \in U$, and $\alpha \in \R$ such that $\alpha \ge f(z)$. Then $\epi f$ is $\UAG$ at $(z,\alpha)$.
\end{corollary}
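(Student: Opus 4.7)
The plan is to combine two results already at our disposal: the characterization of lower-$C^1$ functions as locally Lipschitz and approximately convex functions due to \cite{DaGe04}, and Proposition \ref{prop-ac-uag}, which transfers approximate convexity of a function into the $\UAG$ property of its epigraph. The deduction is essentially immediate, so the task is mostly bookkeeping between the open domain $U$ of $f$ and the geodesic-space setting of Proposition \ref{prop-ac-uag}.

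First I would pass from $U$ to $\R^n$ by defining an extension $\tilde f : \R^n \to (-\infty, \infty]$ with $\tilde f = f$ on $U$ and $\tilde f = +\infty$ off $U$. Since $\dom \tilde f = U$, the epigraph is unchanged: $\epi \tilde f = \epi f$, so it suffices to prove that $\epi \tilde f$ is $\UAG$ at $(z, \alpha)$ when viewed inside the geodesic space $\R^n \times \R$. Because $U$ is open, there is some $r_0 > 0$ with $B(z, r_0) \subseteq U$; by convexity of balls in $\R^n$, any line segment (the unique geodesic in $\R^n$) between points of $B(z, r_0/2)$ stays inside $U$. Consequently, approximate convexity of $f$ at $z$ relative to $U$ is equivalent to approximate convexity of $\tilde f$ at $z$ as a function on $\R^n$.

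Next I would invoke \cite[Corollary 3]{DaGe04}: as $f$ is lower-$C^1$ on $U$, it is in particular locally Lipschitz and approximately convex on $U$, hence approximately convex at $z$; by the previous paragraph, the same property holds for $\tilde f$ on $\R^n$. Finally, applying Proposition \ref{prop-ac-uag} with $X = \R^n$, function $\tilde f$, point $z$, and level $\alpha \ge f(z) = \tilde f(z)$ yields that $\epi \tilde f$, and hence $\epi f$, is $\UAG$ at $(z, \alpha)$. No serious obstacle arises; the only matter of care is the passage from the open-domain setting to a function defined on all of $\R^n$, which is trivial here because approximate convexity is a purely local property at $z$.
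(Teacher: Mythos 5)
Your proposal is correct and follows the same route as the paper: the result is obtained directly by combining \cite[Corollary 3]{DaGe04} (lower-$C^1$ implies approximately convex) with Proposition \ref{prop-ac-uag}. The extra care you take in extending $f$ by $+\infty$ outside the open set $U$ is a harmless and valid way to fit the statement into the hypotheses of Proposition \ref{prop-ac-uag}, which the paper leaves implicit.
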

\begin{remark}
A natural example   of a lower-$C^1$ function is the sum of a convex function and a $C^1$ function.  
\end{remark} 

The simplest, most classical, and most important type of amenable sets (see \cite{RoWe98}) are simply defined by $C^1$ inequality constraints satisfying the Mangasarian--Fromovitz condition. In the following proposition we prove that these sets are $\UAG$.   

\begin{proposition}
Let $G \subseteq \R^n$ and $z \in G$. Suppose that there exists a neighborhood $U$ of $z$ and the $C^1$ functions $g_1, \ldots, g_m : U \to \R$  with the property that
\[U \cap G = \{x \in U \mid g_j(x) \le 0 \text{ for all } j \in \{1, \ldots, m\}\}.\]
Moreover, assume that there exists $d \in \R^n$ such that $Dg_j(z)(d) < 0$ for all $j \in \{1, \ldots, m\}$. Then $G$ is $\UAG$ at $z$.
\end{proposition}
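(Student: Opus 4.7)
The plan is to take $\gamma$ to be the straight-line Euclidean geodesic from $x$ to $x'$ and to construct $f$ by perturbing $\gamma$ slightly along the Mangasarian--Fromovitz direction $d$, which (after rescaling) we may assume satisfies $|d|=1$. Specifically, I will write $f(t)=\gamma(t)+\eta(t)\,d$ with a tent-shaped coefficient $\eta(t)=c\,\min(t,l-t)$ that vanishes at the endpoints, ensuring $f(0)=x$ and $f(l)=x'$. The constant $c$ will be chosen small enough to make the ratio $|f(t)-\gamma(t)|/t$ smaller than $\varepsilon$, yet large enough to drive $f(t)$ into $G$. Set $\delta=-\tfrac12\max_j Dg_j(z)(d)>0$; by continuity of $Dg_j$ at $z$, pick $\rho>0$ with $\overline{B}(z,\rho)\subseteq U$ on which $\|Dg_j(y)-Dg_j(z)\|\le\beta$ for all $j$, where $\beta>0$ is a small parameter chosen later, and then fix $R\le\rho/3$ so the whole construction lives inside $\overline{B}(z,\rho)$.

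The crux of the argument is an estimate controlling how far the straight line $\gamma$ can stray out of $G$. For $x,x'\in B(z,R)\cap G$ with $l=|x'-x|$ and $v=(x'-x)/l$, I will prove
\[
g_j(\gamma(t))\;\le\;2\beta\min(t,l-t)\qquad\text{for all }t\in[0,l],\ j=1,\dots,m.
\]
This follows from $g_j(\gamma(t))=g_j(x)+\int_0^t Dg_j(\gamma(s))(v)\,ds$ together with $\|Dg_j(\gamma(s))-Dg_j(z)\|\le\beta$: the integral differs from $tDg_j(z)(v)$ by at most $\beta t$, and the same estimate applied with $t=l$ together with $g_j(x),g_j(x')\le 0$ yields $lDg_j(z)(v)\le -g_j(x)+\beta l$. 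Substituting gives $g_j(\gamma(t))\le(1-t/l)g_j(x)+2\beta t\le 2\beta t$, and by running the same argument from $x'$ backwards one gets $g_j(\gamma(t))\le 2\beta(l-t)$.

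Now set $c=2\beta/\delta$ and $f(t)=\gamma(t)+c\min(t,l-t)\,d$. The whole segment between $\gamma(t)$ and $f(t)$ stays in $\overline{B}(z,\rho)$, so $Dg_j(y)(d)\le Dg_j(z)(d)+\beta\le -2\delta+\beta\le -\delta$ (taking $\beta\le\delta$). The fundamental theorem of calculus then yields
\[
g_j(f(t))\;\le\;g_j(\gamma(t))-\delta\,\eta(t)\;\le\;(2\beta-c\delta)\min(t,l-t)\;=\;0,
\]
so $f(t)\in G$. Finally, $|f(t)-\gamma(t)|/t=c\min(t,l-t)/t\le c=2\beta/\delta$, which is smaller than $\varepsilon$ provided $\beta<\varepsilon\delta/2$. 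Choosing $\beta<\min\{\delta,\varepsilon\delta/2\}$ from the outset and shrinking $R$ correspondingly completes the proof.

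The main obstacle is the estimate in the second paragraph. A naive Lipschitz bound only gives $g_j(\gamma(t))=O(l)$, which would force the perturbation coefficient $c$ to be of order one and destroy the factor $\varepsilon$ in the UAG inequality. The gain to $2\beta\min(t,l-t)$ comes from exploiting simultaneously the endpoint conditions $g_j(x),g_j(x')\le 0$ to cancel the linear part of $g_j$ along $v$, leaving only the oscillation error $\beta$ of $Dg_j$ near $z$. Everything else---verifying $f$ is a continuous curve, checking that the perturbed points remain in $U$, and cashing in the strict negativity $Dg_j(z)(d)<0$---is straightforward.
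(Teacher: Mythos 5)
Your proof is correct and follows essentially the same strategy as the paper's: take the straight segment as $\gamma$, show via the endpoint conditions and the continuity of the derivatives that $g_j\circ\gamma$ exceeds $0$ only by an amount of order $\beta\min(t,l-t)$, and then push the segment back into $G$ by a tent-shaped perturbation along the Mangasarian--Fromovitz direction. The only cosmetic differences are that the paper packages your key estimate as the ``approximate convexity'' of $C^1$ functions and uses the parabolic weight $s(1-s)\|x-y\|$ in place of your $\min(t,l-t)$.
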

\begin{proof}
Note first that we may assume that $Dg_j(z)(d) \le -2$ for all $j \in \{1, \ldots, m\}$ (otherwise scale the vector $d$ appropriately). Let $\eps \in (0,1)$ and $\eps' = \eps/\|d\|$. Since the functions $g_j$, where $j \in \{1, \ldots, m\}$, are $C^1$, they are approximately convex at $z$, so there exists $r > 0$ such that $B(z,2r) \subseteq U$ and, for any $x,y \in B(z,r)$,
\[g_j((1-s)x + sy) \le (1-s)g_j(x) + sg_j(y) + \eps' s (1-s) \|x-y\|\] 
for all $s \in [0,1]$ and all $j \in \{1, \ldots, m\}$. Moreover, again by $C^1$-smoothness, we can also suppose that $r$ is sufficiently small so that
\begin{align*}
g_j((1-s)x + sy + \eps' s (1-s) \|x-y\|d) - g_j((1-s)x + sy) &\le Dg_j(z)(\eps' s (1-s) \|x-y\|d) \\
& \quad + \eps' s (1-s) \|x-y\|
\end{align*}
for all $s \in [0,1]$ and all $j \in \{1, \ldots, m\}$. (Note that  $(1-s)x + sy + \eps' s (1-s) \|x-y\|d \in B(z,2r)$.)

Adding the above inequalities and taking into account that for $x,y \in B(z,r) \cap G$ we have $g_j(x) \le 0$ and $g_j(y) \le 0$, we get
\[g_j((1-s)x + sy + \eps' s (1-s) \|x-y\|d) \le 0\]
for all $s \in [0,1]$ and all $j \in \{1, \ldots, m\}$. 

Let $x,y \in B(z,r) \cap G$ with $x \ne y$, $l=\|x-y\|$, and define $f : [0,l] \to G$ by 
\[f(t) = \left(1-\frac{t}{l}\right)x + \frac{t}{l}y + \eps' \frac{t}{l}\left(1-\frac{t}{l}\right) \|x-y\|d.\]
Then, considering the geodesic $\gamma : [0,l] \to \R^n$, $\gamma(t) =  (1-t/l)x + (t/l)y$, we have
\[\frac{\|f(t)-\gamma(t)\|}{t} = \eps'\|d\|\left(1- \frac{t}{l}\right) < \eps\]
for all $t \in (0,l]$.

\end{proof}

In \cite{Fed59}, the definition of sets of positive reach was introduced in relation with the measure of global curvature properties without assuming differentiability. The main used tool is the behavior of the metric projection onto the set.  A closed subset $A$ of a metric space $X$ is said to have {\it positive reach} if there exists $\delta >0$ such that $P_A(x)$ is a singleton for every $x\in X$ with $\text{dist}(x, A) <\delta$. 
In \cite[Theorem 1.3]{Lyt05}, it is shown that if $A$ is a compact subset of a smooth Riemannian manifold (at least $C^3$ with a $C^2$  Riemannian tensor), then $A$ is $2$-convex if and only if $A$ has positive reach. Another concept related to positive reach is the one of proximal smoothness studied in \cite{ClaSteWol05}.

The fact that the metric projection is a singleton also gives a characterization of prox-regularity in $\R^n$. This notion was introduced for a better understanding of how local properties of $\text{dist}(\cdot, A)$ correspond to those of $P_A$. Although the definition of prox-regularity is based on the normal cone, we will employ its description  via the metric projection.  Namely, a closed subset $A$ of $\R^n$ is {\it prox-regular} at $z \in A$ if there exists $r > 0$ such that $P_A(x)$ is a singleton for all $x \in B(z,r)$ (see \cite{PolRocThi00}). Sets that are prox-regular at all their points are also called sets with unique footpoints or EFP-sets (see \cite{Kle81, Ban82}).

\begin{remark}
Let $A \subseteq \R^n$ be compact. If $A$ is prox-regular at every $z \in A$, then $A$ has finite extrinsic curvature at every $z \in A$. 

Indeed, let $r : A \to [0,\infty]$ be the function that assigns to $z \in A$ the supremum $r(z)$ of all numbers $r > 0$ such that, for all points in $B(z,r)$, $P_A$ is a singleton. Since $A$ is prox-regular at every $z \in A$, we have that $r(z) > 0$ for all $z \in A$. By \cite [Remark 4.2]{Fed59}, $r$ is continuous on $A$, so it attains its minimum in $A$. Said differently, $A$ has positive reach. Using \cite[Theorem 1.3]{Lyt05}, $A$ is $2$-convex, and the conclusion follows.
\end{remark}

In order to state a local version of the  fact pointed out in the previous remark we first prove the following property.

\begin{lemma} \label{lemma-prox-reg-pos-reach}
Let $A \subseteq \R^n$ be closed. Then $A$ is prox-regular at $z \in A$ if and only if there exists $r > 0$ such that $A \cap \overline{B}(z,r)$ has positive reach.
\end{lemma}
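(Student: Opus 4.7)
The equivalence splits into two implications; the reverse direction is the quick one, and the forward direction is the substantive one.

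For the \emph{backward direction} ($\Leftarrow$), I would argue as follows. If $A' := A \cap \overline{B}(z, r)$ has positive reach, fix $\delta > 0$ such that $P_{A'}$ is single-valued on $\{x : \text{dist}(x, A') < \delta\}$, and set $r_0 := \frac{1}{2}\min\{r, \delta\}$. For $x \in B(z, r_0)$, the inclusion $z \in A'$ gives $\text{dist}(x, A') \le \|x - z\| < r_0 < \delta$, so $P_{A'}(x)$ is a singleton. Any $p \in P_A(x)$ satisfies $\|p - x\| \le \|z - x\| < r_0$, and hence $\|p - z\| < 2 r_0 \le r$, so $p \in A'$. Then $\|p - x\| = \text{dist}(x, A) \le \text{dist}(x, A') \le \|p - x\|$ places $p$ in $P_{A'}(x)$, and the singleton property there transfers to $P_A(x)$, giving prox-regularity at $z$ with parameter $r_0$.

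For the \emph{forward direction} ($\Rightarrow$), I plan to use the Poliquin--Rockafellar--Thibault characterization \cite{PolRocThi00}: prox-regularity of $A$ at $z$ with parameter $r$ is equivalent to the existence of constants $r_1 \in (0, r]$, $\sigma > 0$ and $C > 0$ such that (i) $P_A(q + v) = q$ whenever $q \in A \cap B(z, r_1)$ and $v$ is a proximal normal to $A$ at $q$ with $\|v\| \le \sigma$, and (ii) a $2$-convex-type estimate $\text{dist}(m, A) \le C \|q_1 - q_2\|^2$ holds for the midpoint $m := (q_1 + q_2)/2$ of any $q_1, q_2 \in A \cap B(z, r_1)$. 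I would then pick $R, \delta > 0$ small, with $R + \delta < r_1$ and $\delta < \sigma$, and set $A' := A \cap \overline{B}(z, R)$. For $x$ with $\text{dist}(x, A') < \delta$, the point $p := P_A(x)$ is single-valued. If $p \in A'$, a direct minimization argument (any $q \in P_{A'}(x)$ satisfies both $\|q - x\| \ge \|p - x\|$ and $\|q - x\| \le \|p - x\|$) gives $P_{A'}(x) = \{p\}$. Otherwise, I would first show that every $q \in P_{A'}(x)$ must lie on $\partial B(z, R)$: else $q$ would be an unconstrained local minimum of $\|x - \cdot\|$ on $A$, so $x - q$ would be a proximal normal at $q$ of length $< \sigma$, and property (i) would force $P_A(x) = q \ne p$, contradicting $p \notin A'$.

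The \emph{main obstacle} will be ruling out two distinct boundary points $q_1 \ne q_2$ in $P_{A'}(x) \cap \partial B(z, R)$. My plan is to use the parallelogram identity $\|m - x\|^2 = \|q_1 - x\|^2 - \frac{1}{4}\|q_1 - q_2\|^2$ for $m := (q_1 + q_2)/2$, combined with estimate (ii), which produces a point of $A$ within $C \|q_1 - q_2\|^2$ of $m$. After a bookkeeping step confirming that this approximant can be taken in $A'$ (possibly after a small inward adjustment, since $m$ itself lies in $\overline{B}(z, R)$ by convexity of the ball), the triangle inequality contradicts $\|q_1 - x\| = \text{dist}(x, A')$ once $\delta$ is chosen smaller than an explicit constant depending on $C$. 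The delicate point is precisely this last step: guaranteeing that the quadratic approximant of $m$ stays in $\overline{B}(z, R)$, which is likely to require careful calibration of $R$ relative to $\delta$ and $C$.
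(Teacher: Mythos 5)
Your backward direction is correct and complete, and your overall strategy for the forward direction --- assume two distinct points $q_1\neq q_2$ in $P_{A'}(x)$, pass to the midpoint $m$, use the parallelogram identity $\|m-x\|^2=\|q_1-x\|^2-\tfrac14\|q_1-q_2\|^2$, and derive a contradiction from a point of $A'$ within $O(\|q_1-q_2\|^2)$ of $m$ --- is exactly the skeleton of the paper's proof (which takes $r=R/4$ and reaches the same final inequality $\|y-u\|\le\sqrt{\delta^2-\tfrac14\|p-q\|^2}+\tfrac1{8r}\|p-q\|^2<\delta$). The gap is that you import the crucial quantitative ingredient, the estimate $\operatorname{dist}(m,A)\le C\|q_1-q_2\|^2$, as part of a ``Poliquin--Rockafellar--Thibault characterization.'' That estimate does not appear in that form in \cite{PolRocThi00}; their characterizations involve single-valued Lipschitz projections, $C^{1+}$ smoothness of the squared distance, and hypomonotonicity of truncated normal cones. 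The midpoint estimate is essentially the finite-extrinsic-curvature/$2$-convexity property whose equivalence with prox-regularity is precisely what Proposition \ref{prop-prox-reg-ext-curv-loc} (which rests on this lemma together with \cite[Theorem 1.3]{Lyt05}) is meant to establish, so invoking it as known is at best a circularity risk within the paper's architecture and at worst an unproved claim. It \emph{can} be derived from the proximal normal inequality (averaging $\langle m-p,\,q_i-p\rangle\le\frac{\|m-p\|}{2\sigma}\|q_i-p\|^2$ over $i=1,2$ with $p=P_A(m)$ gives $\|m-p\|\le\frac{1}{2\sigma}\|q_1-q_2\|^2$), but you would need to supply that derivation; the paper instead proves the bound $\|m-u\|\le\frac{1}{8r}\|p-q\|^2$ from scratch, using only single-valuedness of $P_A$ on a ball, by extending the segment $[u,m]$ to a point $x$ with $\|u-x\|=2r$, showing $P_A(x)=u$ via Federer's ray lemma \cite[Theorem 4.8(6)]{Fed59}, and exploiting that one of the angles $\angle_m(p,x)$, $\angle_m(q,x)$ is at most $\pi/2$.

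The second issue is the step you yourself flag as delicate and leave unresolved: verifying that the approximant of $m$ lies in $\overline{B}(z,R)$. This is not a bookkeeping afterthought but the other place where the quadratic order of the estimate is essential: since $\|m-z\|\le\sqrt{R^2-\tfrac14\|q_1-q_2\|^2}\le R-\frac{\|q_1-q_2\|^2}{8R}$, the approximant stays in the ball exactly when $C\le\frac{1}{8R}$, so the calibration you defer is $R\le\frac{1}{8C}$ (and similarly $\delta<\frac{1}{8C}$ for the final contradiction); with the paper's constant $C=\frac{1}{8r}$ this is automatic. Also, your reduction ``every $q\in P_{A'}(x)$ lies on $\partial B(z,R)$'' quietly uses the ray-extension fact that a short proximal normal direction at $q$ projects back to $q$ with $P_A(q+v)=\{q\}$ for $\|v\|\le\sigma$; this again is Federer's Theorem 4.8(6) in disguise and should be justified rather than folded into ``property (i).'' None of these steps would fail, but as written the proposal delegates the two genuinely substantive points of the proof to an inaccurate citation and an acknowledged to-do.
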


\begin{proof}

It is clear from the definition that if $z \in A$ and there exists $r > 0$ such that $A \cap \overline{B}(z,r)$ has positive reach, then $A$ is prox-regular at $z$.

Suppose now that  $A$ is prox-regular at $z \in A$. Then  there exists $R > 0$ such that $P_A(x)$ is a singleton for all $x \in B(z,R)$. Take $r = R/4$. Let $y \in \R^n$ with $\delta = \text{dist}(y, A \cap \overline{B}(z,r)) < r$. We show by contradiction that $P_{A \cap \overline{B}(z,r)}(y)$ is a singleton.

Suppose there exist two distinct points $p, q \in P_{A \cap \overline{B}(z,r)}(y)$.  Denote $m = \frac{1}{2}p + \frac{1}{2}q$. Then
\begin{equation}\label{lemmaprox-reg-pos-reach-eq1}
\|m - z\|^2 = \frac{1}{2}\|p - z\|^2 + \frac{1}{2}\|q - z\|^2 - \frac{1}{4}\|p - q\|^2 \le r^2 - \frac{1}{4}\|p - q\|^2.
\end{equation}
Since $\|y - m\| < \delta$, $m \notin A$. Let $u = P_A(m)$. We show that $u \in \overline{B}(z,r)$. Extend the segment $[u,m]$ beyond $m$ to the point $x$ such that $\|u - x\| = 2r$. In other words, $x = u + \frac{2r}{\|m - u\|}(m-u)$. Then $\|x - z\| \le \|x - m\| + \|m - z\| < 2r + r = 3r$, so $P_A(x)$ is a singleton. In addition, $P_A(x) = u$. To see this, denote 
\[\tau = \sup\{t \ge 0 : P_A(u + t(m-u)) = u\}.\] 
Then $\tau > 0$ as, for $t = 1$, $P_A(m) = u$. If $\tau = \infty$, the whole ray starting at $u$ in the direction $m-u$ projects onto $u$. If $\tau < \infty$,  applying \cite[Theorem 4.8.(6)]{Fed59}, we get $u + \tau(m-u) \notin B(z,4r)$. Hence,
\begin{align*}
4r & \le \|u + \tau(m-u) - z\| \le \tau\|m-u\| + \|u-z\| \le \tau \|m-u\| + \|u-m\| + \|m-z\| \\
& < \tau \|m-u\| + \|m - p\| + r \le \tau \|m-u\| + 2r,
\end{align*}
from where $\tau > 2r/\|m - u\|$. This shows that $P_A(x) = u$.

One of the angles $\angle_{m}(p,x)$ or $\angle_m(q,x)$ is at most $\pi/2$, and we can suppose that this is the case for $\angle_{m}(p,x)$. We have
\begin{align*}
\|x - u\|^2 &\le \|x - p\|^2 \le \|x - m\|^2 + \|p - m\|^2 =  (\|x - u\| - \|m - u\|)^2 + \frac{1}{4}\|p - q\|^2\\
& = \|x - u\|^2 - 2\|x - u\| \|m - u\| + \|m - u\|^2 + \frac{1}{4}\|p - q\|^2\\
& \le \|x - u\|^2 - 2\|x - u\| \|m - u\| + \|m - p\|^2 + \frac{1}{4}\|p - q\|^2\\
& = \|x - u\|^2 - 4r\|m - u\| + \frac{1}{2}\|p - q\|^2,
\end{align*}
and we get
\[\|m - u\| \le \frac{1}{8r} \|p - q\|^2.\]
Thus, using \eqref{lemmaprox-reg-pos-reach-eq1},
\begin{align*}
\|u - z\| \le \|u - m\| + \|m - z\| \le \frac{1}{8r} \|p - q\|^2 + \sqrt{ r^2 - \frac{1}{4}\|p - q\|^2} < r.
\end{align*}
This means that $u \in \overline{B}(z,r)$. Now,
\[\|y - u\| \le \|y - m\| + \|m - u\| \le \sqrt{\delta^2 - \frac{1}{4}\|p - q\|^2} + \frac{1}{8r}\|p - q\|^2 < \sqrt{\delta^2 - \frac{1}{4}\|p - q\|^2} + \frac{1}{8\delta}\|p - q\|^2 < \delta,\]
a contradiction. We conclude that $P_{A \cap \overline{B}(z,r)}$ is a singleton and, finally, that $A \cap \overline{B}(z,r)$ has positive reach.
\end{proof}

Combining Lemma \ref{lemma-prox-reg-pos-reach}, \cite[Theorem 1.3]{Lyt05}, and Remark \ref{rmk-extc-2-convex}, we get the following equivalences.

\begin{proposition}\label{prop-prox-reg-ext-curv-loc}
Let $A \subseteq \R^n$ be closed and $z \in A$. Then the following are equivalent:
\begin{enumerate}
\item[(i)] $A$ is prox-regular at $z$;
\item[(ii)] there exists $r > 0$ such that $A \cap \overline{B}(z,r)$ has positive reach;
\item[(iii)] there exists $r > 0$ such that $A \cap \overline{B}(z,r)$ is $2$-convex;
\item[(iv)] $A$ has finite extrinsic curvature at $z$.
\end{enumerate}
\end{proposition}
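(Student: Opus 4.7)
The plan is to establish the four equivalences by chaining together results already available: Lemma \ref{lemma-prox-reg-pos-reach} gives (i) $\Leftrightarrow$ (ii) directly, so no extra work is needed there. For (ii) $\Leftrightarrow$ (iii) I would invoke \cite[Theorem 1.3]{Lyt05} applied to the compact set $A\cap\overline{B}(z,r)$ inside the smooth Riemannian manifold $\R^n$: that result asserts precisely that, for compact subsets of such a manifold, positive reach is equivalent to $2$-convexity. Both (ii) and (iii) are stated with the same existential quantifier on $r$, so the equivalence is immediate (one may have to shrink $r$ in one direction, but compactness of closed balls in $\R^n$ causes no issue).

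For (iii) $\Rightarrow$ (iv), suppose $A\cap\overline{B}(z,r)$ is $(\sigma,2,\rho)$-convex for some $\sigma,\rho>0$. For any $p,q\in B(z,r/2)\cap A$ with $d(p,q)\le \min\{\rho,r/2\}$, we have $p,q\in A\cap \overline{B}(z,r)$, so there is a curve in $A\cap\overline{B}(z,r)$ joining them whose length is at most $d(p,q)+\sigma d(p,q)^3$. This curve lies a fortiori in $A$, so $d^A(p,q)\le d(p,q)+\sigma d(p,q)^3$, giving finite extrinsic curvature at $z$ (after possibly shrinking the radius so that the diameter bound $\rho$ is automatic).

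For (iv) $\Rightarrow$ (iii) I would apply Remark \ref{rmk-extc-2-convex}: the ambient space $\R^n$ is $\CAT(0)$, and closed subsets of $\R^n$ are locally compact, so the remark yields $R>0$ such that $B(z,R)\cap A$ is $2$-convex, and the final sentence of that remark notes that intersections with sufficiently small \emph{closed} balls inherit the property, giving (iii) for some $r<R$.

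The argument is essentially a routine assembly, so I do not expect a real obstacle; the only mild subtlety is bookkeeping the radii (switching between $B(z,R)\cap A$, $\overline{B}(z,r)\cap A$, and balls around $z$ in which the finite-extrinsic-curvature inequality is valid), and making sure that the induced intrinsic distance $d^{A\cap\overline{B}(z,r)}$ used implicitly when one quotes $(\sigma,2,\rho)$-convexity of the truncated set controls the intrinsic distance $d^A$ on a slightly smaller ball — which it does simply because any curve inside the truncated set is also a curve in $A$.
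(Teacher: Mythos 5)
Your proposal is correct and matches the paper's own (one-line) argument, which obtains the equivalences by combining exactly the same three ingredients: Lemma \ref{lemma-prox-reg-pos-reach} for (i)$\Leftrightarrow$(ii), \cite[Theorem 1.3]{Lyt05} applied to the compact set $A\cap\overline{B}(z,r)$ for (ii)$\Leftrightarrow$(iii), and Remark \ref{rmk-extc-2-convex} together with the monotonicity $d^A\le d^{A\cap\overline{B}(z,r)}$ for (iii)$\Leftrightarrow$(iv). The radius bookkeeping you flag is handled correctly and is the only point requiring care.
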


Another immediate consequence of Propositions \ref{prop-prox-reg-ext-curv-loc}  and \ref{prop-subset-extc} is the next result.

\begin{corollary} \label{cor-C11-Prop-P}
Let $A \subseteq \R^n$ be closed. If $A$ is prox-regular at $z \in A$, then $A$ is $\UAG$ at $z$.
\end{corollary}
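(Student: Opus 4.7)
The plan is to observe that this statement is a direct combination of two results already in place, so no new analytic work is needed. First I would note that $\R^n$ is a $\CAT(0)$ space (hence $\CAT(\kappa)$ for any $\kappa \ge 0$), and that a closed subset $A$ of $\R^n$ is automatically locally compact in the subspace topology, since closed subsets of a locally compact Hausdorff space are locally compact. This verifies the ambient hypotheses needed to apply Proposition \ref{prop-subset-extc}.

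Next, I would invoke the equivalence (i) $\Leftrightarrow$ (iv) in Proposition \ref{prop-prox-reg-ext-curv-loc}: since $A$ is prox-regular at $z$, it has finite extrinsic curvature at $z$. At this point both hypotheses of Proposition \ref{prop-subset-extc} are met for $A$ at $z$, so that proposition applies and yields directly that $A$ is $\UAG$ at $z$.

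Since the corollary is essentially a bookkeeping consequence of results established earlier, there is no substantive obstacle; the only thing to check carefully is that the local compactness of $A$ and the $\CAT(\kappa)$ condition on the ambient space, required in Proposition \ref{prop-subset-extc}, are automatic here. The written proof can thus be reduced to two lines citing Proposition \ref{prop-prox-reg-ext-curv-loc} and Proposition \ref{prop-subset-extc} in sequence.
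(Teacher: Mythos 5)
Your proposal is correct and matches the paper's own (unwritten) argument exactly: the paper introduces the corollary as ``an immediate consequence of Propositions \ref{prop-prox-reg-ext-curv-loc} and \ref{prop-subset-extc},'' which is precisely the two-step chain you describe. Your added checks that $\R^n$ is $\CAT(\kappa)$ and that a closed subset of $\R^n$ is locally compact are the right details to verify and are both fine.
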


\section{Alternating projections}\label{sect-alt-proj}

In this section we first introduce in a geodesic setting the two main geometric ingredients which we use in the study of the local linear convergence of alternating projections and focus on situations in which two intersecting closed sets satisfy these assumptions. Then we prove the convergence result.

\subsection{Super-regularity and separable intersection} 

\begin{definition}
Let $(X,d)$ be a geodesic space and $A \subseteq X$. We say that $A$ is {\it super-regular} at $z \in A$ if given any $\eps > 0$ there exists $r > 0$ such that any two points in $B(z,r)$ are joined by a unique geodesic segment and for all $y \in B(z,r/2)\setminus A$, $x \in P_A(y)$, and all $x' \in A \cap B(z,r)$ with $x' \ne x$, $\angle_{x}(y,x') \ge \pi/2-\eps$.
\end{definition}

\begin{remark}\label{rmk-def-superreg}
In the above definition, $d(y,x) \le d(y,z) < r/2$, which implies that $d(z,x) \le d(z,y) + d(y,x) < r$; hence $x,y,x' \in B(z,r)$.
\end{remark}

We provide below an example of a subset of $\R^2$ which shows that super-regularity is not persistent nearby. At this point we only justify why the set is not super-regular on balls centered at the origin. Its super-regularity at the origin will be discussed later on (see Example \ref{ex-tent-fct-revisited}).

\begin{example}\label{ex-tent-fct}
Define the function $g : [0,1/2] \to \R$ in the following way: if $t \in (1/2^{n+1},1/2^n]$, where $n \in \N$ with $n \ge 1$, let
$$
g(t)=\left\{
\begin{array}{ll}
\ds \frac{1}{2^n}\left(t - \frac{1}{2^{n+1}}\right) & \mbox{if } t \in \ds \left(\frac{1}{2^{n+1}}, \frac{3}{2^{n+2}}\right],\\[4mm]
\ds \frac{1}{2^n}\left(\frac{1}{2^{n}} - t\right) & \mbox{if } t \in \ds \left(\frac{3}{2^{n+2}}, \frac{1}{2^n}\right].
\end{array}\right.
$$
In addition, take $g(0) = 0$.

Let $A = \left\{(t,g(t)) \mid 0 \le t \le 1/2 \right\}$. Then there is no $R > 0$ such that $A$ is super-regular at all points in $B((0,0), R) \cap A$.

To see this, observe first that for every $R > 0$ there exists a sufficiently large $n$ so that 
\[z = \left(\frac{3}{2^{n+2}}, \frac{1}{2^{2n+2}}\right) \in B((0,0), R) \cap A.\]
We show that $A$ is not super-regular at $z$. Let $\eps = \arctan(1/2^{n+1})$ and $r > 0$. Take 
\[t \in \left(\frac{1}{2^{n+1}}, \frac{3}{2^{n+2}}\right) \quad \text{and} \quad t' = \frac{3}{2^{n+1}} - t\]
such that, denoting $x = (t,g(t))$ and $x' = (t',g(t'))$, $\|z - x\| = \|z-x'\| < r/4$. Let $y \in \R^2 \setminus A$ be in the interior of the triangle with vertices $z, (0, 1/2^{n+1}), (0,1/2^n)$ such that $x \in P_A(y)$ and $d(y,x) < r/4$. Then $d(y,z) < r/2$ and
\[\angle_{x}(y,x') = \frac{\pi}{2} - \angle_{x}(z,x') = \frac{\pi}{2} - \arctan(1/2^n) < \frac{\pi}{2} - \eps.\]
\end{example}

\begin{remark}
As amenability, prox-regularity, and the existence of a finite extrinsic curvature are persistent nearby and imply super-regularity (according to \cite[Proposition 4.8]{LewLukMal09}, Corollary \ref{cor-C11-Prop-P}, Proposition \ref{prop-subset-extc}, and Theorem \ref{thm-prop-p-superreg}), Example \ref{ex-tent-fct}  also shows that $A$ does not satisfy these properties at $(0,0)$.
\end{remark}

In what follows we prove that uniform approximation by geodesics implies super-regularity in the context of Alexandrov spaces with an upper curvature bound. 

\begin{theorem} \label{thm-prop-p-superreg}
Let $\kappa > 0$, $X$ be a $\CAT(\kappa)$ space, and $A \subseteq X$. If $A$ is $\UAG$ at $z \in A$, then $A$ is super-regular at $z$.
\end{theorem}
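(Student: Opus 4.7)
The plan is proof by contradiction: assume $\angle_x(y, x') < \pi/2 - \eps$ for admissible $y \in B(z, r/2) \setminus A$, $x \in P_A(y)$, and $x' \in A \cap B(z, r) \setminus \{x\}$, and produce a point $a \in A$ with $d(y, a) < d(y, x)$, contradicting $x \in P_A(y)$. The point $a$ will be $f(t_0)$ for some small $t_0 > 0$, where $f$ is the curve in $A$ supplied by UAG applied to the pair $(x, x')$. Geometrically, the UAG-geodesic $\gamma$ from $x$ points approximately toward $x'$, so $d(y, \gamma(t))$ decreases linearly in $t$ at a rate strictly exceeding the UAG transverse error $\eps'$, and hence the perturbation from $\gamma$ to $f$ cannot undo the decrease.

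Pick $\eps' > 0$ small (two smallness conditions on $\eps'$ will appear below), apply UAG at $z$ to obtain a radius $R > 0$, and shrink $r < R$ so that all pertinent triangles with vertices in $B(z, r)$ lie well within the $\CAT(\kappa)$ comparison regime. Given admissible $y, x, x'$, UAG on $(x, x')$ yields $f : [0, l] \to A$ from $x$ to $x'$ and a geodesic $\gamma : [0, l] \to X$ from $x$ with $d(f(t), \gamma(t)) < \eps' t$. Since $d(\gamma(l), x') = d(\gamma(l), f(l)) < \eps' l$ while $d(x, \gamma(l)) = l$, the $M_\kappa^2$ cosine law applied to $\Delta(x, \gamma(l), x')$ gives $\cos \overline{\angle}_x(\gamma(l), x') \ge (1-\eps')/(1+\eps')$, whence $\angle_x(\gamma, x') = O(\sqrt{\eps'})$, which for $\eps'$ small enough is less than $\eps/4$. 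Subadditivity of Alexandrov angles then yields $\angle_x(y, \gamma) \le \angle_x(y, x') + \angle_x(\gamma, x') < (\pi/2 - \eps) + \eps/4 = \pi/2 - 3\eps/4$.

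Now invoke the preliminaries' observation that a strict Alexandrov-angle inequality is witnessed by the comparison angle of a suitably shrunken triangle in $M_\kappa^2$: there exist $y' \in [x, y]$ and $s \in (0, l]$ such that the comparison triangle of $\Delta(x, y', \gamma(s))$ in $M_\kappa^2$ has angle $\bar\alpha < \pi/2 - 3\eps/4$ at the comparison vertex $\bar x$. The $M_\kappa^2$ cosine law along the side $[\bar x, \overline{\gamma(s)}]$ yields, for $\bar u_t$ at distance $t$ from $\bar x$ on this segment, $\frac{d}{dt}\big|_{t=0^+} d(\bar y', \bar u_t) = -\cos \bar \alpha < -\sin(3\eps/4)$, so for some small $t_0 \in (0, s]$, $d(\bar y', \bar u_{t_0}) \le d(y', x) - t_0 \sin(3\eps/4)/2$. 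The $\CAT(\kappa)$ inequality transfers this bound to $X$; since by uniqueness the geodesic from $x$ to $\gamma(s)$ is the initial subsegment of $\gamma$, the point $u_{t_0}$ corresponds to $\gamma(t_0)$, giving $d(y', \gamma(t_0)) \le d(y', x) - t_0 \sin(3\eps/4)/2$. Because $y' \in [x, y]$ gives $d(y, x) = d(y, y') + d(y', x)$, the triangle inequality upgrades this to $d(y, \gamma(t_0)) \le d(y, x) - t_0 \sin(3\eps/4)/2$; combining with $d(f(t_0), \gamma(t_0)) < \eps' t_0$ and imposing the second smallness condition $\eps' < \sin(3\eps/4)/2$ finally delivers $d(y, f(t_0)) < d(y, x)$, the desired contradiction.

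The principal obstacle is that UAG's $\gamma$ need not terminate at $x'$, only within $\eps' l$ of it; this is handled by the angle-defect bound $\angle_x(\gamma, x') = O(\sqrt{\eps'})$ via $M_\kappa^2$ comparison, so a single smallness requirement on $\eps'$ controls both the angle defect and the transverse error. A secondary concern is uniformity of the first-order cosine-law expansion, but since only one small $t_0$ is needed (with the triangle fixed), its $O(t^2)$ error causes no real difficulty.
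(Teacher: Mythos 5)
Your proposal is correct and follows essentially the same route as the paper's proof: bound the angle between the UAG geodesic $\gamma$ and $[x,x']$ via the $M_\kappa^2$ cosine law (the paper's Claim 1), then, assuming $\angle_x(y,x')<\pi/2-\eps$, pass to a shrunken comparison triangle to find $\gamma(t_0)$ strictly closer to $y$ with a linear-in-$t_0$ gain that beats the UAG error $\eps' t_0$, contradicting $x\in P_A(y)$ via $f(t_0)\in A$. The only difference is cosmetic: you extract the linear gain from a first-variation expansion of the cosine law, where the paper uses its Lemma 2.1 together with an explicit chain of spherical trigonometric estimates.
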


\begin{proof}
Fix $\eps \in (0,1)$. For
\begin{equation}\label{thm-superreg-subset-eq1}
\eps' = \frac{1-\cos(\eps/2)}{2} \in (0,1/2),
\end{equation}
we find $R > 0$ given by uniform approximation by geodesics. Take $r = \min\left\{R, D_\kappa/8\right\}$. Then each two points in $B(z,r)$ are joined by a unique geodesic segment. Let $y \in B(z, r/2) \setminus A$ and  $x \in P_{A}(y)$. By Remark \ref{rmk-def-superreg}, $x \in B(z,r)$. Let $x' \in  B(z,r) \cap A$ with $x' \ne x$. Then there exist $f : [0,l] \to A$ with $f(0) = x$ and $f(l) = x'$ and a geodesic $\gamma : [0,l] \to X$ starting at $x$ such that
\[\frac{d(\gamma(t),f(t))}{t} < \eps' \quad \text{for all } t \in (0,l].\]
Denote $v=\gamma(l)$. Then
\[l = d(v,x) \le d(v,x') + d(x',x) < \eps' l + 2r < l/2 + D_\kappa/4,\]
so $l < D_\kappa/2$.

\begin{cl}  \label{thm-superreg-subset-claim1}
$\eta = \angle_{x}\left(v,x'\right) < \eps/2$.
\end{cl}
\begin{proof}[Proof of Claim \ref{thm-superreg-subset-claim1}]

Suppose that $\eta \ge \eps/2$. By the cosine law in $M_\kappa^2$ we have
\begin{align*}
\cos(\sqrt{\kappa}\, d(v,x')) \le  \cos(\sqrt{\kappa}\, d(v,x))\cos(\sqrt{\kappa}\, d(x',x)) + \sin(\sqrt{\kappa}\, d(v,x))\sin(\sqrt{\kappa}\, d(x',x))\cos \eta.
\end{align*}
Denoting $D = d(x',x)$, we get
\begin{align*}
\cos(\sqrt{\kappa}\, \eps' l) &\le  \cos(\sqrt{\kappa}\, l)\cos(\sqrt{\kappa}\,D) + \sin(\sqrt{\kappa}\, l)\sin(\sqrt{\kappa}\, D)\cos(\eps/2)\\
& =  \cos(\eps/2)\cos(\sqrt{\kappa}\, (l - D))+ (1-\cos(\eps/2)) \cos(\sqrt{\kappa}\, l)\cos(\sqrt{\kappa}\,D)\\
& \le  \cos(\eps/2) + (1-\cos(\eps/2))\cos(\sqrt{\kappa}\, l).
\end{align*}
Note that $1 - a^2/2 \le \cos a \le 1 - a^2/2 + a^4/24$ for all $a \ge 0$. Applying the first inequality for $a = \sqrt{\kappa}\, \eps' l$ and the second one for $a = \sqrt{\kappa}\, l$, we obtain
\begin{align*}
1 - \frac{\kappa (\eps')^2l^2}{2} & \le \cos(\sqrt{\kappa}\, \eps' l) \le  \cos(\eps/2) + (1-\cos(\eps/2))\left(1 - \frac{\kappa l^2}{2} + \frac{\kappa^2 l^4}{24}\right)\\
& \le 1 - (1 - \cos(\eps/2))\frac{\kappa l^2}{2}\left(1 -\frac{\kappa l^2}{12} \right),
\end{align*}
from where
\[\eps ' > (\eps')^2 \ge (1-\cos(\eps/2))(1 - kl^2/12) >  (1-\cos(\eps/2))/2,\]
which contradicts \eqref{thm-superreg-subset-eq1}.
\end{proof}

\begin{cl} \label{thm-superreg-subset-claim2}
$\angle_{x}\left(y,x'\right) \ge \pi/2 - \eps$.
\end{cl}
\begin{proof}[Proof of Claim \ref{thm-superreg-subset-claim2}]
Suppose that $\angle_{x}\left(y,x'\right) < \pi/2 - \eps$. Denote $\xi = \angle_{x}\left(y,v\right)$. By Claim \ref{thm-superreg-subset-claim1},
\[\xi \le \angle_{x}\left(y,x'\right) + \angle_{x}\left(v,x'\right) <  \frac{\pi}{2} - \eps + \frac{\eps}{2} = \frac{\pi - \eps}{2}.\]
Then we find two points $y' \in [x,y]$ with $y' \ne x$ and $v' \in [x,v]$ with $v' \ne x$ such that in a comparison triangle $\Delta(\overline{x},\overline{y}',\overline{v}') \subseteq M_\kappa^2$ of the geodesic triangle $\Delta(x,y',v')$, we have $\overline{\xi}' = \angle_{\overline{x}}\left(\overline{y}',\overline{v}'\right) < (\pi - \eps)/2$. Thus, by Lemma \ref{lemma-small-angle}, in the triangle $\Delta(\overline{x},\overline{y}',\overline{v}')$  there exists $\overline{u} \in [\overline{x},\overline{v}']$ such that $d_{M_\kappa^2}(\overline{y}',\overline{u}) < d_{M_\kappa^2}(\overline{y}',\overline{x})$ and $0 < d_{M_\kappa^2}(\overline{x},\overline{u}) < \alpha$, where
\begin{equation}\label{thm-superreg-subset-eq4}
\alpha =  \frac{1}{6\sqrt{\kappa}} \cos \frac{\pi-\eps}{2}\sin(\sqrt{\kappa} \, d_{M_\kappa^2}(\overline{y}',\overline{x})).
\end{equation} 
Take now $u \in [x,v']$ such that $d(x,u) = d_{M_\kappa^2}(\overline{x},\overline{u})$; i.e., $\overline{u}$ is the comparison point of $u$. It follows that $d(y',u) \le d_{M_\kappa^2}(\overline{y}',\overline{u}) < d_{M_\kappa^2}(\overline{y}',\overline{x}) = d(y',x)$. Denote $t = d(x,u) \le l$. Then $t < \alpha < d(y',x)/6$, $u = \gamma(t)$, and $d(u,f(t))  < \eps' t$. Moreover,
\begin{align}\label{thm-superreg-subset-eq6}
\begin{split}
&\cos(\sqrt{\kappa}\, d(y',u)) \ge \cos (\sqrt{\kappa} \, d_{M_\kappa^2}(\overline{y}',\overline{u})) \\
& \quad = \cos (\sqrt{\kappa} \, t)\cos (\sqrt{\kappa} \, d_{M_\kappa^2}(\overline{y}',\overline{x}))+ \sin (\sqrt{\kappa} \, t)\sin (\sqrt{\kappa} \, d_{M_\kappa^2}(\overline{y}',\overline{x})) \cos \overline{\xi}'\\
& \quad \ge \cos (\sqrt{\kappa} \, t)\cos (\sqrt{\kappa} \, d(y',x)) + \sin (\sqrt{\kappa} \, t)\sin (\sqrt{\kappa} \, d(y',x)) \cos \frac{\pi - \eps}{2}.
\end{split}
\end{align}
At the same time, since $x \in P_{A}(y')$, we have 
\[d(y',x) \le d(y',f(t)) \le d(y',u) + d(u,f(t)) < d(y',u) + \eps' t,\]
so
\begin{align*}
\cos (\sqrt{\kappa} \, d(y',x)) & \ge \cos(\sqrt{\kappa}(d(y',u) + \eps' t))\\
& = \cos (\sqrt{\kappa} \, d(y',u)) \cos  (\sqrt{\kappa} \,  \eps' t)  - \sin (\sqrt{\kappa} \, d(y',u)) \sin  (\sqrt{\kappa} \,  \eps' t)\\
& \ge \cos (\sqrt{\kappa} \, d(y',u)) \cos  (\sqrt{\kappa} \, \eps'  t)  - \sin (\sqrt{\kappa} \, d(y',x)) \sin  (\sqrt{\kappa} \, \eps'  t).
\end{align*}
Consequently,
\begin{align*}
&  \cos (\sqrt{\kappa} \, d(y',x)) \\
& \quad \ge \left( \cos (\sqrt{\kappa} \,  t)\cos (\sqrt{\kappa} \, d(y',x)) + \sin (\sqrt{\kappa} \, t)\sin (\sqrt{\kappa} \, d(y',x)) \cos \frac{\pi - \eps}{2}\right)\cos  (\sqrt{\kappa} \, \eps' t)\\
& \quad \quad \quad  - \sin (\sqrt{\kappa} \, d(y',x)) \sin  (\sqrt{\kappa} \, \eps'  t)  \quad \text{by } \eqref{thm-superreg-subset-eq6}\\
& \quad = \cos (\sqrt{\kappa} (d(y',x) - t)) \cos (\sqrt{\kappa} \, \eps'  t)  - \sin (\sqrt{\kappa} \, d(y',x)) \sin  (\sqrt{\kappa} \, \eps'  t)\\
& \quad \quad \quad - \left(1-\cos \frac{\pi - \eps}{2}\right) \sin (\sqrt{\kappa} \, t)\sin (\sqrt{\kappa} \, d(y',x)) \cos  (\sqrt{\kappa} \, \eps'  t)\\
& \quad \ge \cos (\sqrt{\kappa} (d(y',x) - t)) \left(1 - \frac{\kappa (\eps' t)^2}{2} \right) - \sin (\sqrt{\kappa} \, d(y',x)) \sqrt{\kappa} \, \eps' t\\
&  \quad \quad \quad - \left(1-\cos \frac{\pi - \eps}{2}\right) \sqrt{\kappa} \,  t \sin (\sqrt{\kappa} \, d(y',x)),
\end{align*}
from where
\begin{align*}
\sin (\sqrt{\kappa} \, d(y',x)) \sqrt{\kappa}\, t \left(\eps' + 1-\cos \frac{\pi - \eps}{2}\right) & \ge \cos (\sqrt{\kappa} (d(y',x) - t)) - \cos (\sqrt{\kappa} \, d(y',x))  - \frac{\kappa (\eps'  t)^2}{2} \\
& \ge 2 \sin \frac{\sqrt{\kappa} \, t}{2} \sin \left(\sqrt{\kappa} \left(d(y',x) - \frac{t}{2} \right)\right) - \kappa \, t^2.
\end{align*}
Note that, as $\eps \in (0,1)$,
\[ \eps' = \frac{1-\cos(\eps/2)}{2} < \frac{1}{2}\cos\frac{\pi-\eps}{2}.\]
Denote $a=\sqrt{\kappa} \, t/2 < \sqrt{\kappa} \, \alpha/2 < 1/12$ and $b = \sqrt{\kappa} \, d(y',x)$. We have $\sin a \ge a - a^3/6 \ge a - 2a^2$ and
\begin{align*}
\sin(b-a) & = \sin b \cos a - \cos b \sin a = \sin b - \sin b (1 - \cos a) - \cos b \sin a\\
& \ge \sin b - (1 - \cos a + \sin a) \ge \sin b - (a^2/2 + a) \ge \sin b -2a > 0.
\end{align*}
Hence,
\begin{align*}
\sin (\sqrt{\kappa} \, d(y',x)) \sqrt{\kappa}\,  t \left(1-\frac{1}{2}\cos \frac{\pi - \eps}{2}\right) & > \left(\sqrt{\kappa} \, t - \kappa\, t^2 \right) \left(\sin(\sqrt{\kappa}d(y',x)) - \sqrt{\kappa}\, t \right) - \kappa \, t^2\\
& > \sqrt{\kappa} \, t \sin(\sqrt{\kappa}\,d(y',x))  - 3\kappa\, t^2.
\end{align*}
After dividing by $3 \kappa\, t$, rearranging, and using \eqref{thm-superreg-subset-eq4}, we get $t > \alpha$, a contradiction.
\end{proof}
This proves that $A$ is super-regular at $z$.
\end{proof}
 
\begin{example}\label{ex-tent-fct-revisited}
The set $A$ defined in Example \ref{ex-tent-fct} is $\UAG$ at $(0,0)$; hence it is also super-regular there.

To show this, let $\eps > 0$, and take $n \in \N$ sufficiently large so that $\eps > 1/2^n$. Let $x,x' \in B((0,0), 1/2^n) \cap A$. Suppose $x = (s,g(s))$ and $x' = (s',g(s'))$, where $s,s' \in [0,1/2^n]$. 

If $s < s'$, define $f : [0,s'-s] \to A$ by $f(t) = (s+t, g(s+t))$ and the geodesic $\gamma : [0,s'-s] \to \R^2$ starting at $x$ by $\gamma(t) = (s+t, g(s))$. Then $f(0)=x$, $f(s'-s) =x'$, and an exercise shows that $\|\gamma(t) - f(t)\| \le t/2^n < \eps t$ for all $t \in (0,s'-s]$.

Similarly, if $s' < s$, define $f : [0,s-s'] \to A$, $f(t) = (s-t, g(s-t))$, and the geodesic $\gamma : [0,s-s'] \to \R^2$, $\gamma(t) = (s-t, g(s))$. 
\end{example}

\begin{remark} \label{rmk-tent-fct}
Examples \ref{ex-tent-fct} and \ref{ex-tent-fct-revisited} allow us to conclude that uniform approximation by geodesics is not persistent nearby (otherwise $A$ would be super-regular on a ball centered at $(0,0)$, which is not the case). 
\end{remark}

We define and study next the separable intersection property for two sets.

\begin{definition} 
Let $(X,d)$ be a geodesic space and $A,B \subseteq X$. We say that $A$ {\it intersects} $B$ {\it separably} at $z \in A \cap B$ if there exist $\alpha, r > 0$ such that any two points in $B(z,r)$ are joined by a unique geodesic segment and for all $x \in (A \cap B(z,r)) \setminus B$, $y \in P_B(x) \setminus A$ satisfying $\max\{d(y,x),d(y,z)\}<r/2$ and all $x' \in P_A(y)$, $\angle_y(x,x') \ge \alpha$.
\end{definition}

\begin{remark}
Observe that in the above definition, $x,y,x' \in B(z,r)$ because $d(y,x') \le d(y,x) < r/2$, so $d(z,x') \le d(z,y) + d(y, x') < r$. 
\end{remark}

Although we are primarily interested in the nonconvex case, to get a clearer picture about the separable intersection property in a nonlinear setting, we will also consider the scenario where both sets are convex. We show first that, under additional appropriate regularity conditions imposed on the geodesic space, the separable intersection property holds if both sets are convex and one of them contains an open ball that intersects the other set. To this end we consider the following notion which, for two sets that are prox-regular at a point, can be regarded as an analogue at that point of the notion of transversality from Euclidean spaces, where it implies separable intersection.

\begin{definition}
Let $X$ be a geodesic space and $A, B \subseteq X$. We say that $A$ and $B$ are {\it transversal} at $z \in A \cap B$ if there is no nonconstant geodesic $\gamma : [0,l] \to X$ such that $z=\gamma(l/2)$ and $z \in P_A(\gamma(0)) \cap P_B(\gamma(l))$ (and hence $z \in P_A(\gamma(t)) \cap P_B(\gamma(l-t))$ for all $t \in [0,l/2]$).
\end{definition}

In the setting of $\CAT(\kappa)$ spaces, transversality at a given point $z$ still implies separable intersection at $z$ assuming local compactness and an extension property for geodesics joining points that are close to $z$.

\begin{proposition}\label{prop-trans-sep}
Let $\kappa > 0$, $X$ be a complete and locally compact $\CAT(\kappa)$ space, and $A,B$ be closed convex subsets of $X$ that are transversal at $z \in A \cap B$. Suppose also that $X$ has the $c$-geodesic extension property at $z$, where $c \in (0,D_\kappa/4)$. Then $A$ intersects $B$ separably (and, by symmetry, $B$ intersects $A$ separably) at $z$.
\end{proposition}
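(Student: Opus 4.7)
The plan is a contradiction argument built around a limit construction. Negating the separable intersection property at $z$, for each $n \in \N$ we obtain points $x_n \in A \cap B(z, 1/n)$, $y_n \in P_B(x_n) \setminus A$ with $\max\{d(y_n, x_n), d(y_n, z)\} < 1/(2n)$, and $x_n' \in P_A(y_n)$ such that $\epsilon_n := \angle_{y_n}(x_n, x_n') < 1/n$. All three sequences converge to $z$ and $\epsilon_n \to 0$. The goal is to manufacture from these data a nonconstant geodesic through $z$ with $z$ as midpoint witnessing failure of transversality.

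For $n$ large enough that the three points lie in $B(z,c)$ with pairwise distances below $c$, the $c$-geodesic extension property at $z$ supplies length-$c$ geodesics $\zeta_n, \mu_n : [0,c] \to X$: $\zeta_n$ extends $[x_n', y_n]$ past $y_n$ (so $\zeta_n(0) = x_n'$ and $\zeta_n(d(x_n', y_n)) = y_n$), and $\mu_n$ extends $[y_n, x_n]$ past $x_n$ (so $\mu_n(0) = y_n$ and $\mu_n(d(y_n, x_n)) = x_n$). Since geodesics are $1$-Lipschitz, local compactness and Arzel\`a--Ascoli allow extraction of a common subsequence along which $\zeta_n \to \zeta$ and $\mu_n \to \mu$ uniformly, producing length-$c$ geodesics $\zeta, \mu : [0, c] \to X$ with $\zeta(0) = \mu(0) = z$. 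I would then establish (i) $z \in P_A(\zeta(c))$ and (ii) $z \in P_B(\mu(c))$. For (i), convexity of $A$ in the $\CAT(\kappa)$ space yields $\angle_{x_n'}(y_n, a) \ge \pi/2$ for every $a \in A \setminus \{x_n'\}$, and since $\zeta_n$ leaves $x_n'$ in the direction of $y_n$ the same bound holds with $\zeta_n(c)$ in place of $y_n$; the cosine law in $M_\kappa^2$ applied to the comparison triangle of $\Delta(\zeta_n(c), x_n', a)$ (whose perimeter stays well below $2 D_\kappa$ thanks to $c < D_\kappa/4$) then forces $d(\zeta_n(c), a) \ge c$, and passing to the limit gives (i). The symmetric argument at $y_n$, exploiting $y_n = P_B(x_n)$ and that $\mu_n$ leaves $y_n$ in the direction of $x_n$, gives (ii).

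The crux, and the step I expect to be the main obstacle, is to prove $\angle_z(\zeta(t), \mu(s)) = \pi$ for all $s, t \in (0, c]$, since this is exactly what guarantees that concatenating the reverse of $\zeta$ with $\mu$ produces a genuine length-$2c$ geodesic through $z$. The idea is that $y_n$ is an interior point of $\zeta_n$, so the Alexandrov angle at $y_n$ between the backward direction (towards $x_n'$) and any forward point $\zeta_n(d(x_n', y_n) + t)$ equals $\pi$. Combining this with the adjacent-angle inequality at $y_n$ (sum $\ge \pi$) applied to $\zeta_n$ split at $y_n$ together with the third geodesic to $\mu_n(s)$, and using $\angle_{y_n}(x_n', \mu_n(s)) = \angle_{y_n}(x_n', x_n) = \epsilon_n$ (since $\mu_n$ leaves $y_n$ in the direction of $x_n$), yields $\angle_{y_n}(\zeta_n(d(x_n', y_n) + t), \mu_n(s)) \ge \pi - \epsilon_n$. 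Upper semicontinuity of Alexandrov angles in $\CAT(\kappa)$ spaces then forces $\angle_z(\zeta(t), \mu(s)) \ge \pi$, hence equal to $\pi$. The comparison triangle of $\Delta(z, \zeta(t), \mu(s))$ in $M_\kappa^2$ consequently degenerates, giving $d(\zeta(t), \mu(s)) = s + t$, so the concatenation is a nonconstant geodesic of length $2c$ with $z$ as midpoint; combined with (i) and (ii), this contradicts transversality at $z$.
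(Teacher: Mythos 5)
Your proposal is correct and follows essentially the same route as the paper's proof: negate separability to get sequences $x_n,y_n,x_n'\to z$ with angle $<1/n$, use the $c$-geodesic extension property to extend $[x_n',y_n]$ beyond $y_n$ and $[y_n,x_n]$ beyond $x_n$, pass to limits via local compactness, and show the limiting endpoints project onto $z$ in $A$ and $B$ respectively (via convexity of the sets and upper semicontinuity of the Alexandrov angle), contradicting transversality. The only variations are harmless: you build the limit geodesic by Arzel\`a--Ascoli together with $\angle_z(\zeta(t),\mu(s))=\pi$, whereas the paper takes limits of the endpoints $u_n,v_n$ alone and deduces $d(u,v)=2c$ from the $\CAT(\kappa)$ comparison at $y_n$; and in your step (i) the comparison triangle need not exist for $a\in A$ far from $z$, a case the paper disposes of separately with the plain triangle inequality ($d(z,a)>2c$ gives $d(u,a)>c$ directly).
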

\begin{proof}
Observe first that two points in $B(z,2c)$ are joined by a unique geodesic segment. We argue by contradiction. Suppose that $A$ does not intersect $B$ separably at $z$. Then for every $n \ge 2$ there exist $x_n \in (A \cap B(z,2c/n)) \setminus B$, $y_n \in P_B(x_n) \setminus A$ satisfying $\max\{d(x_n,y_n),d(z,y_n)\} < c/n$, and $x'_n \in P_A(y_n)$ such that $\alpha_n=\angle_{y_n}(x_n,x'_n) < 1/n$. 

Both sequences $(x_n)$ and $(y_n)$ converge to $z$. Moreover, $x_n,y_n,x'_n \in B(z,c)$ for all $n \ge 2$, and we can consider the geodesics $\gamma_A^n,\gamma_B^n : [0,c] \to X$ satisfying $\gamma_A^n(0)=x'_n$, $\gamma_A^n(t_1)=y_n$ for some $t_1 \in (0,c)$ and $\gamma_B^n(0)=y_n$, $\gamma_B^n(t_2)=x_n$ for some $t_2 \in (0,c)$. Notice that the geodesic segments determined by $\gamma^n_A$ and $\gamma^n_B$ are contained in $B(z,2c)$. Denoting $u_n=\gamma^n_A(c)$ and $v_n=\gamma^n_B(c)$, we have $\angle_{y_n}(u_n,v_n) > \pi - 1/n$ and $d(y_n,u_n) =  c - d(x'_n,y_n) \to c$ as $d(x'_n,y_n) \le d(x_n,y_n) < c/n$. Now the $\CAT(\kappa)$ condition yields $d(u_n,v_n) \to 2c$.

By local compactness, we may assume that $(u_n)$ and $(v_n)$ are convergent to some $u$ and $v$, respectively (otherwise consider convergent subsequences). Because $d(z,u) = c, d(z,v)=c$ and $d(u,v) = 2c$, if $\gamma:[0,2c]\to X$ is the unique geodesic from $u$ to $v$, then $z=\gamma(c)$. 

We show next that $z = P_A(u)$. Let $w \in A$, $w \ne z$. If $d(z,w) > 2c$, then $d(u,w) \ge d(z,w) - d(u,z) > c = d(u,z)$. Suppose next that $d(z,w) \le 2c$. As $x'_n = P_A(y_n)$, $\angle_{x'_n}(u_n,w) \ge \pi/2$, and so, by the upper semi-continuity of the Alexandrov angle, $\angle_z(u,w) \ge \pi/2$. This shows that $d(u,w) > d(u,z)$. Similarly, $z=P_B(v)$, which means that $A$ and $B$ are not transversal at $z$, and the proof is now complete.
\end{proof}

The next two rather technical lemmas are intermediate steps in proving the transversality of convex sets for which the aforementioned intersection condition holds.

\begin{lemma}\label{lemma-trans-proj}
Let $\kappa > 0$, $X$ be a complete $\CAT(\kappa)$ space, and $A,B$ be closed convex subsets of $X$. If there is a point $z \in  A \cap B$ such that $A$ and $B$ are not transversal at $z$, then there exists a nonconstant geodesic $\gamma : [0,l] \to X$ such that $\gamma(l/2) = z$, $P_{\gamma([0,l/2])}(u) = z$ for all $u \in A \cap B(z,D_\kappa/2)$ and $P_{\gamma([l/2,l])}(v) = z$ for all $v \in B \cap B(z,D_\kappa/2)$. Consequently, $P_{\gamma([0,l])}(w) = z$ for all $w \in (A \cap B) \cap B(z,D_\kappa/2)$.
\end{lemma}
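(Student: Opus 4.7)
The plan is to unpack the definition of non-transversality to produce a nonconstant geodesic through $z$ whose endpoints project to $z$ on the respective sets, shrink it to have length less than $D_\kappa$, and then invoke the standard CAT$(\kappa)$ projection inequality to argue that $z$ is in fact the unique closest point on each half-segment from any nearby point of the corresponding set.

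First, since $A$ and $B$ are not transversal at $z$, the negation of the definition gives a nonconstant geodesic $\gamma_0 : [0,l_0] \to X$ with $\gamma_0(l_0/2) = z$ and $z \in P_A(\gamma_0(0)) \cap P_B(\gamma_0(l_0))$; by the parenthetical in the definition, also $z \in P_A(\gamma_0(t)) \cap P_B(\gamma_0(l_0-t))$ for every $t \in [0,l_0/2]$. Restricting $\gamma_0$ to a sufficiently short sub-interval symmetric about $l_0/2$, I obtain a nonconstant geodesic $\gamma : [0,l] \to X$ with $l < D_\kappa$ that still satisfies $\gamma(l/2) = z$ and $z \in P_A(\gamma(t)) \cap P_B(\gamma(l-t))$ for all $t \in [0,l/2]$. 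The restriction preserves these properties because sub-geodesics of geodesics are geodesics, and $\gamma_0(\epsilon)$ lies on the geodesic from $\gamma_0(0)$ to $z$, so $z \in P_A(\gamma_0(0))$ still forces $z \in P_A(\gamma_0(\epsilon))$.

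Fix $u \in A \cap B(z, D_\kappa/2)$ and $t \in [0,l/2]$. Since $A$ is closed and convex with $z = P_A(\gamma(t))$, and $d(z,u) < D_\kappa/2$, the CAT$(\kappa)$ property recalled in the preliminaries (namely $d(P_C(x), y) \le d(x,y)$ whenever $y \in C$ and $d(P_C(x),y) \le D_\kappa/2$) yields
\[
d(\gamma(t), u) \ge d(z, u).
\]
Thus $z$ minimizes $d(u,\cdot)$ on the segment $\gamma([0,l/2])$. Since $l/2 < D_\kappa$, any two points of this segment lie at distance less than $D_\kappa$ and hence are joined by a unique geodesic, which must be the sub-segment of $\gamma$; so $\gamma([0,l/2])$ is a nonempty closed convex subset of $X$. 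As $\mathrm{dist}(u, \gamma([0,l/2])) \le d(u,z) < D_\kappa/2$, the projection $P_{\gamma([0,l/2])}(u)$ is a singleton, and the previous inequality forces $P_{\gamma([0,l/2])}(u) = z$.

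Symmetrically, for $v \in B \cap B(z, D_\kappa/2)$ and $t \in [l/2, l]$, the fact that $z = P_B(\gamma(t))$ combined with the same projection inequality gives $P_{\gamma([l/2,l])}(v) = z$. Finally, if $w \in (A \cap B) \cap B(z, D_\kappa/2)$, the two cases together yield $d(\gamma(t), w) \ge d(z, w)$ for all $t \in [0,l]$, so $z$ attains the minimum distance from $w$ on $\gamma([0,l])$; since $l < D_\kappa$ makes $\gamma([0,l])$ convex and $d(w,z) < D_\kappa/2$ guarantees uniqueness of the projection onto this convex set, $P_{\gamma([0,l])}(w) = z$. The only real care needed in the argument is bookkeeping around the length of $\gamma$: it must be short enough (below $D_\kappa$) so that every sub-segment is convex and every projection involved is a singleton, which is precisely why the freedom to restrict $\gamma_0$ at the outset is essential.
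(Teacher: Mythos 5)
Your proposal is correct and follows essentially the same route as the paper: extract the geodesic from the failure of transversality, shrink it so that all relevant distances stay below $D_\kappa/2$, and then use the standard $\CAT(\kappa)$ projection inequality for closed convex sets (the paper phrases this via the angle condition $\angle_z(u,\gamma(t))\ge\pi/2$, you via the equivalent distance inequality $d(P_C(x),y)\le d(x,y)$) to conclude $d(\gamma(t),u)\ge d(z,u)$ and hence $P_{\gamma([0,l/2])}(u)=z$. Your extra care about the convexity of the sub-segments and the singleton-valuedness of the projections is a harmless elaboration of what the paper leaves implicit.
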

\begin{proof}
The failure of transversality of $A$ and $B$ at $z$ yields the existence of a nonconstant geodesic $\gamma:[0,l] \to X$, where $l < D_\kappa/2$, such that $\gamma(l/2)=z$ and $\{z\} = P_A(\gamma(t))\cap P_B(\gamma(1-t))$ for all $t \in [0,l/2]$. Let $u \in A \cap B(z,D_\kappa/2)$ and $t \in [0,l/2]$. Then $\angle_z(u,\gamma(t)) \ge \pi/2$ because $z=P_A(\gamma(t))$ which shows that $d(u,\gamma(t)) \ge d(u,z)$, and hence $z=P_{\gamma([0,l/2])}(u)$. A similar argument yields $P_{\gamma([l/2,l])}(v) = z$ for all $v \in B \cap B(z,D_\kappa/2)$.
\end{proof}

\begin{lemma}\label{lemma-proj-ball}
Let $\kappa' \le \kappa$, $X$ be a complete $\Re_{\kappa',\kappa}$ domain and $z \in X$. Suppose that $X$ has the $c$-geodesic extension property at $z$, where $c \in (0,D_\kappa/4)$. If $\gamma:[0,l]\to X$ is a nonconstant geodesic with $\gamma(l/2)=z$, then for any $w \in B(z,c)$ with $P_{\gamma([0,l])}(w)=z$ there is no $\delta > 0$ such that $P_{\gamma([0,l/2])}(u) = z$ for all $u \in B(w,\delta)$.
\end{lemma}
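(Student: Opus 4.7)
The plan is: for every $\delta > 0$, produce $u \in B(w, \delta)$ with $P_{\gamma([0, l/2])}(u) \ne z$. The starting point is to pin down the angles at $z$ exactly: since $P_{\gamma([0, l])}(w) = z$ and $z = \gamma(l/2)$ is interior to $\gamma([0, l])$, the standard projection-onto-convex-set property recalled in the preliminaries gives $\angle_z(w, \gamma(0)) \ge \pi/2$ and $\angle_z(w, \gamma(l)) \ge \pi/2$, and since the two adjacent angles along $\gamma$ at $z$ sum to exactly $\pi$ in $\CBB(\kappa')$, both must equal $\pi/2$.

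Fix a small $s > 0$, set $p := \gamma(l/2 - s) \in \gamma([0, l/2])$, and for large $n$ let $u_n$ be the point at distance $1/n$ from $w$ on the unique geodesic from $p$ to $w$ (well-defined since all relevant distances lie well below $D_\kappa$). Then $u_n \to w$, and because the direction from $z$ to $p$ coincides with the direction from $z$ to $\gamma(0)$, we have $\angle_z(u_n, \gamma(0)) = \angle_z(u_n, p)$. Once it is shown that $\angle_z(u_n, p) < \pi/2$ for some large $n$, the first variation formula applied at $z$ along $\gamma$ in the direction of $\gamma(0)$ yields
\[\lim_{t \to 0^+} \frac{d(u_n, \gamma(l/2 - t)) - d(u_n, z)}{t} = -\cos \angle_z(u_n, \gamma(0)) < 0,\]
so $d(u_n, \gamma(l/2 - t)) < d(u_n, z)$ for some small $t > 0$, proving $P_{\gamma([0, l/2])}(u_n) \ne z$.

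The main task is the inequality $\angle_z(u_n, p) < \pi/2$. By the $\CAT(\kappa)$ angle comparison, it suffices to establish the strict inequality for the comparison angle $\widetilde\angle_z$ of the triangle $p, z, u_n$ in $M_\kappa^2$. Two expansions feed in: (a) in the triangle $p, z, w$ (right angle at $z$ and small legs $s, \rho := d(z, w)$), the $\CAT(\kappa)$ and $\CBB(\kappa')$ hinge comparisons sandwich $\mu_s := d(p, w)$ between the right-triangle hypotenuses in $M_\kappa^2$ and $M_{\kappa'}^2$, yielding $\mu_s = \sqrt{s^2 + \rho^2} + O(\max(|\kappa|, |\kappa'|) \cdot s^2 \rho^2)$ and in particular $\mu_s > \rho$ for small $s$; (b) first variation at $w$ along the geodesic $w \to p$ gives $d(z, u_n) = \rho - (\cos \phi)/n + o(1/n)$, where $\phi = \angle_w(z, p) \to 0$ as $s \to 0$. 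Substituting these expansions into the $M_\kappa^2$ law of cosines (compare Lemma~\ref{lemma-small-angle}) makes the leading $1/n$ contribution to $\cos \widetilde\angle_z$ positive, of order $s^2/(n\mu_s\rho)$; the main obstacle is verifying that this strictly outweighs the residual $O(\kappa)$ curvature corrections arising from the non-Euclidean law of cosines, which is achieved by a two-level parameter choice, first shrinking $s$ (so both $\phi$ and the Pythagorean discrepancy in $\mu_s$ are small) and then letting $n \to \infty$.
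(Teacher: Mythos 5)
Your construction of the test points $u_n$ on $[p,w]$ is natural, and the skeleton (force both adjacent angles at $z$ to equal $\pi/2$ via the two-sided curvature bound, then exhibit a point near $w$ making an acute angle with $\gamma(0)$ at $z$) is a legitimate alternative to the paper's argument. But the key reduction --- ``it suffices to establish the strict inequality for the comparison angle in $M_\kappa^2$'' --- fails, and no choice of parameters repairs it. As $n\to\infty$ the $M_\kappa^2$-comparison angle at $\overline z$ of the triangle $(z,p,u_n)$ converges to that of $(z,p,w)$, and the $\CAT(\kappa)$ inequality only guarantees this limit is $\ge\pi/2$; in general it is \emph{strictly} obtuse. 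Your own expansion (a) is the reason: $\mu_s=d(p,w)$ is merely sandwiched between the right hypotenuses in $M_\kappa^2$ and $M_{\kappa'}^2$, and whenever $\mu_s$ strictly exceeds the $M_\kappa^2$ hypotenuse (already the case for $X=M^2_{\kappa_0}$ with $\kappa'<\kappa_0<\kappa$, which satisfies every hypothesis of the lemma), the cosine of the limiting comparison angle is a fixed negative quantity of order $-(\kappa-\kappa')\kappa s^2\rho^2$, independent of $n$. The favorable first-variation contribution you identify is of order $\kappa s^2/(n\rho)$ in the same cosine (in the Euclidean model the numerator $s^2+d(z,u_n)^2-d(p,u_n)^2$ equals $2s^2/(n\mu_s)+O(1/n^2)$, so the order is $s/(n\mu_s\rho)$ in the cosine itself, not $s^2/(n\mu_s\rho)$ --- a minor slip). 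Both the deficit and the gain scale as $s^2$, so shrinking $s$ first does not change their ratio, which is $\Theta(1/(n\rho^3))$; and since you must take $1/n<\delta$ with $\delta$ arbitrarily small, $n$ is forced to be arbitrarily large and the gain loses. Thus for large $n$ the comparison angle is obtuse even though the genuine angle $\angle_z(u_n,p)$ is acute (as it visibly is in $M^2_{\kappa_0}$): the comparison angle systematically overestimates the true one, and your sufficient condition is unavailable exactly when you need it.

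A related warning sign is that your argument never uses the $c$-geodesic extension property at $z$, which the statement assumes and which the paper's proof invokes twice. The paper argues by contradiction along quite different lines: extend $[w,z]$ beyond $z$ to $z'$; pick $z_n\in\gamma([0,l/2])$ with $z_n\to z$; extend $[z',z_n]$ beyond $z_n$ to $u_n$ with $d(z',u_n)=c$; the $\CBB(\kappa')$ condition (via $\angle_{z'}(z,z_n)\to 0$) forces $u_n\to w$, so some $u_{n_0}$ lands in $B(w,\delta)$ and the assumed projection property gives $d(u_{n_0},z)<d(u_{n_0},z_{n_0})$; combined with $d(z',z_{n_0})\ge d(z',z)$ (right angle at $z$ plus $\CAT(\kappa)$) and $z_{n_0}\in[z',u_{n_0}]$, this violates the triangle inequality. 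To salvage your route you would need a way to certify that the \emph{actual} Alexandrov angle $\angle_z(u_n,\gamma(0))$ is acute --- which means exploiting the lower curvature bound non-perturbatively rather than pushing everything into a single $M_\kappa^2$ comparison; as written, the central step does not go through.
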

\begin{proof}
Suppose, on the contrary, that there exist such a point $w$ and a positive number $\delta$. Notice first that any two points in $B(z,2c)$ are joined by a unique geodesic segment. In addition, $w \ne z$.

As $X$ is a $\CAT(\kappa)$ space and $P_{\gamma([0,l])}(w)=z$, we have $\angle_z(\gamma(0),w) \ge \pi/2$ and $\angle_z(\gamma(l),w) \ge \pi/2$. Recall that in any $\CBB(\kappa')$ space, the sum of adjacent angles is $\pi$, so $\angle_z(\gamma(0),w) = \angle_z(\gamma(l),w) = \pi/2$. 

Extend the geodesic from $w$ to $z$ beyond $z$ to a geodesic of length $c$, and let $z'$ be its new endpoint. Then $z' \notin \gamma([0,l])$ (to see this, note, e.g., that otherwise one of the angles $\angle_z(\gamma(0),w)$ or $\angle_z(\gamma(l),w)$ would measure $\pi$).

Choose a sequence $(z_n) \subseteq \gamma([0,l/2])$ such that $z_n \to z$ and $0 < d(z,z_n) < c$ for all $n \in \N$.  Denote $\alpha_n = \angle_{z'}(z,z_n)$. By the $\CAT(\kappa)$ condition, $\alpha_n \to 0$.

Extend the geodesic from $z'$ to $z_n$ beyond $z_n$ to a geodesic of length $c$, and denote the new endpoint by $u_n$. Because $\alpha_n \to 0$, the $\CBB(\kappa')$ condition yields $d(w,u_n) \to 0$, so $d(w,u_{n_0}) < \delta$ for some $n_0 \in \N$. This means that $P_{\gamma([0,l/2])}(u_{n_0}) = z$; hence $d(u_{n_0},z) < d(u_{n_0},z_{n_0})$.

Using again the fact that the sum of adjacent angles is $\pi$, we get $\angle_z(z_{n_0},z') = \pi/2$; thus, by the $\CAT(\kappa)$ condition, $d(z',z_{n_0}) \ge d(z',z)$. 

Consequently,
\[d(z',u_{n_0}) = d(z',z_{n_0}) + d(z_{n_0},u_{n_0}) > d(z',z) + d(z,u_{n_0}),\]
a contradiction.
\end{proof}

\begin{theorem} \label{thm-int-transv}
Let $\kappa' \le \kappa$, $X$ be a complete $\Re_{\kappa',\kappa}$ domain, and $A,B$ be closed convex subsets of $X$. Suppose that $X$ has the $c$-geodesic extension property at $z \in A \cap B$, where $c \in (0,D_\kappa/4)$. If there exists $w \in \inte(A) \cap B$ with  $d(w,z) < c$, then $A$ and $B$ are transversal at $z$.
\end{theorem}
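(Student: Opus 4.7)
The plan is to argue by contradiction, combining Lemmas \ref{lemma-trans-proj} and \ref{lemma-proj-ball} directly. Suppose $A$ and $B$ are not transversal at $z$. Since $X$ is in particular a complete $\CAT(\kappa)$ space and $A,B$ are closed and convex, Lemma \ref{lemma-trans-proj} produces a nonconstant geodesic $\gamma:[0,l]\to X$ with $\gamma(l/2)=z$ satisfying $P_{\gamma([0,l/2])}(u)=z$ for every $u\in A\cap B(z,D_\kappa/2)$, and likewise $P_{\gamma([l/2,l])}(v)=z$ for every $v\in B\cap B(z,D_\kappa/2)$. In particular, $P_{\gamma([0,l])}(w)=z$ since $w\in A\cap B$ and $d(w,z)<c<D_\kappa/4$.

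Now exploit that $w\in\inte(A)$: pick $\delta>0$ so small that $B(w,\delta)\subseteq A$ and also $B(w,\delta)\subseteq B(z,D_\kappa/2)$ (possible since $d(w,z)<c<D_\kappa/4$). Then every $u\in B(w,\delta)$ lies in $A\cap B(z,D_\kappa/2)$, so Lemma \ref{lemma-trans-proj} gives $P_{\gamma([0,l/2])}(u)=z$ for all such $u$. This is exactly the conclusion ruled out by Lemma \ref{lemma-proj-ball} applied to the point $w\in B(z,c)$ (recall $X$ is an $\Re_{\kappa',\kappa}$ domain with the $c$-geodesic extension property at $z$, and $P_{\gamma([0,l])}(w)=z$). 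The contradiction establishes transversality.

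The verification is largely bookkeeping: the two preceding lemmas do the geometric work, so the only steps of substance are (i) recognizing that the failure of transversality, together with $w\in A\cap B$, produces a point where the whole projection onto $\gamma([0,l])$ collapses to $z$, and (ii) using the interior-point hypothesis on $A$ to furnish an entire open neighborhood of $w$ whose points project onto the half-geodesic $\gamma([0,l/2])$ at $z$. The main care needed is to choose $\delta$ small enough so that the neighborhood stays inside $B(z,D_\kappa/2)$, so that Lemma \ref{lemma-trans-proj} is applicable pointwise in $B(w,\delta)$.

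I do not anticipate a serious obstacle, since essentially all the curvature-sensitive geometry (using both the upper bound $\kappa$ for comparison of angles and projections, and the lower bound $\kappa'$ for the identity of adjacent angles used inside Lemma \ref{lemma-proj-ball}) has already been absorbed into the two lemmas. The only subtlety is consistently keeping all auxiliary points within the radius $D_\kappa/2$ so that uniqueness of geodesics, projection characterizations, and the cited angle-comparison results remain valid.
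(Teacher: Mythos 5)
Your argument is correct and is essentially identical to the paper's proof: the paper also chooses $\delta\in(0,D_\kappa/4)$ with $B(w,\delta)\subseteq A$, notes $B(w,\delta)\subseteq B(z,D_\kappa/2)$, and concludes by combining Lemmas \ref{lemma-trans-proj} and \ref{lemma-proj-ball} exactly as you do. You have merely spelled out the bookkeeping that the paper leaves implicit.
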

\begin{proof}
Let $\delta \in (0,D_\kappa/4)$ so that $B(w,\delta) \subseteq A$. Note that $B(w,\delta) \subseteq B(z,D_\kappa/2)$. The result follows by applying Lemmas \ref{lemma-trans-proj} and \ref{lemma-proj-ball}.
\end{proof}

As an immediate consequence of the above result and Proposition \ref{prop-trans-sep}, in the presence of local compactness, we can obtain the  separable intersection property.

\begin{theorem}
Under the assumptions of Theorem \ref{thm-int-transv}, if $X$ is additionally locally compact, then $A$ intersects $B$ separably and $B$ intersects $A$ separably at $z$.
\end{theorem}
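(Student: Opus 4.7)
The plan is essentially a direct chaining of two previously established results, so the proof should be very short. First, I would invoke Theorem \ref{thm-int-transv} applied to the data $(X, A, B, z, c, w)$: since $X$ is a complete $\Re_{\kappa',\kappa}$ domain with the $c$-geodesic extension property at $z$, and since $w \in \inte(A) \cap B$ satisfies $d(w,z) < c$, the theorem yields that $A$ and $B$ are transversal at $z$ in the sense of the definition preceding Proposition \ref{prop-trans-sep}.

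Next, I would feed this transversality into Proposition \ref{prop-trans-sep}. The hypotheses of that proposition require $X$ to be a complete and locally compact $\CAT(\kappa)$ space, $A$ and $B$ to be closed convex subsets that are transversal at $z$, and $X$ to have the $c$-geodesic extension property at $z$ with $c \in (0, D_\kappa/4)$. All of these hold: the $\Re_{\kappa',\kappa}$ structure supplies the $\CAT(\kappa)$ condition; completeness and convexity of $A, B$ are part of the standing hypotheses of Theorem \ref{thm-int-transv}; local compactness is the extra assumption we have added; the geodesic extension property transfers verbatim; and transversality was obtained in the first step. Proposition \ref{prop-trans-sep} then gives that $A$ intersects $B$ separably at $z$, and the symmetry remark in its statement gives that $B$ intersects $A$ separably at $z$.

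There is essentially no obstacle here, since every hypothesis needed by the two cited results is either explicitly among the assumptions of the present theorem or follows immediately from them, and no new geometric argument is required. The only thing worth double-checking is that the definition of transversality is symmetric in $A$ and $B$ (which it is, since reversing the roles of $A$ and $B$ amounts to reparametrizing the geodesic $\gamma$ by $t \mapsto l-t$), so that Proposition \ref{prop-trans-sep} may indeed be applied with $A$ and $B$ swapped to yield the second separable-intersection conclusion.
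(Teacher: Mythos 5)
Your proposal is correct and matches the paper's own argument exactly: the paper presents this theorem as an immediate consequence of Theorem \ref{thm-int-transv} (yielding transversality) combined with Proposition \ref{prop-trans-sep} (converting transversality plus local compactness into separable intersection), with the two-sided conclusion already supplied by the symmetry clause in the statement of Proposition \ref{prop-trans-sep}. Your extra check that transversality is symmetric in $A$ and $B$ is a harmless but unnecessary addition.
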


\begin{remark} Actually we only need the $\CAT(\kappa)$ and $\CBB(\kappa')$ conditions to hold for sufficiently small triangles that are close to the point where we aim to show that the sets intersect separably.
\end{remark}

We finish the study of the separable intersection property with a result for two possibly nonconvex sets. Namely, we show that if two super-regular subsets of an Alexandrov space with an upper curvature bound intersect at a nonzero angle (in the sense assumed below), then they intersect separably.

\begin{theorem} \label{thm-angle-sep-int}
Let $\kappa > 0$, $X$ be a $\CAT(\kappa)$ space, and $A,B$ be two subsets of $X$ that are super-regular at $z \in A \cap B$. If there exist $\sigma, R > 0$ such that $\angle_{z}(p,q) \ge \sigma$ for all $p \in (B(z,R) \cap A) \setminus B$ and all $q \in (B(z,R) \cap B) \setminus A$, then $A$ intersects $B$ separably (and, by symmetry, $B$ intersects $A$ separably) at $z$.
\end{theorem}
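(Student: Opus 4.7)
The plan is to exploit super-regularity of both $A$ and $B$ (to make the relevant angles at $y$ and at $x'$ close to $\pi/2$) together with the hypothesis $\angle_z(\cdot,\cdot)\ge\sigma$ (to push the angle $\angle_y(z,x')$ well below $\pi/2$), the two being combined via the angle-sum inequality in a comparison triangle in $M_\kappa^2$ and the triangle inequality for Alexandrov angles. Fix $\eps\in(0,\sigma/8)$. Super-regularity of $A$ and $B$ at $z$ provides radii $r_A,r_B>0$; pick $r>0$ with $r\le\min\{r_A,\,r_B/2,\,R,\,D_\kappa/4\}$ and so small that every geodesic triangle with vertices in $B(z,r)$ has comparison triangle in $M_\kappa^2$ of spherical excess $\kappa\cdot\mathrm{area}<\eps$, possible since the area of triangles with perimeter $\le 6r$ in $M_\kappa^2$ tends to zero with $r$. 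I would set $\alpha=\sigma/2$.

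Let $x\in(A\cap B(z,r))\setminus B$, $y\in P_B(x)\setminus A$ with $\max\{d(y,x),d(y,z)\}<r/2$, and $x'\in P_A(y)$. A short computation (as in the remark following the definition of separable intersection) places $x,y,x'$ in $B(z,r)$, and $y$ differs from $x,z,x'$ because $y\notin A$ while those three points lie in $A$. Super-regularity of $B$ at $z$, applied with outside point $x\in B(z,r_B/2)\setminus B$, projection $y$, and comparison point $z\in B\cap B(z,r_B)$, yields $\angle_y(x,z)\ge\pi/2-\eps$. If $x'\in B$, the same super-regularity with comparison point $x'$ immediately gives $\angle_y(x,x')\ge\pi/2-\eps\ge\alpha$; if $x'=z$, then $\angle_y(x,x')=\angle_y(x,z)\ge\pi/2-\eps\ge\alpha$.

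The substantive case is $x'\in A\setminus B$ with $x'\ne z$. Super-regularity of $A$ at $z$, applied with outside point $y\in B(z,r/2)\setminus A$, projection $x'$, and comparison point $z$, gives $\angle_{x'}(y,z)\ge\pi/2-\eps$. The hypothesis, applied with $p=x'$ and $q=y$ (both in $B(z,R)$, with $p\in A\setminus B$ and $q\in B\setminus A$), gives $\angle_z(x',y)\ge\sigma$. In the geodesic triangle $\Delta(z,x',y)$, each Alexandrov angle is bounded above by the corresponding comparison angle in $M_\kappa^2$, and those comparison angles sum to $\pi+\kappa\cdot\mathrm{area}<\pi+\eps$. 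Hence
\[
\angle_y(z,x')\le\pi+\eps-\sigma-(\pi/2-\eps)=\pi/2-\sigma+2\eps.
\]
The triangle inequality for Alexandrov angles at $y$ (which holds because angles form a pseudo-metric on the space of directions) then gives
\[
\angle_y(x,x')\ge\angle_y(x,z)-\angle_y(z,x')\ge(\pi/2-\eps)-(\pi/2-\sigma+2\eps)=\sigma-3\eps>\sigma/2=\alpha,
\]
completing the proof.

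The main hurdle is the angle-sum bookkeeping: one must simultaneously absorb the super-regularity slack and the spherical excess of the comparison triangle into a single quantitative estimate, which forces the careful choice of $\eps$ and of $r$. The three-way case analysis ($x'\in B$, $x'=z$, or the generic case) is mechanical once this central inequality is in place.
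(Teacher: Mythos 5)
Your proof is correct and follows essentially the same route as the paper's: super-regularity applied at $y$ (for $B$) and at $x'$ (for $A$), the angle hypothesis at $z$, the near-Euclidean angle-sum bound for small geodesic triangles, and the triangle inequality for Alexandrov angles at $y$. The only cosmetic differences are that you justify the angle-sum bound explicitly via the spherical excess of the comparison triangle in $M_\kappa^2$ (the paper simply asserts it for small $R$) and your constants ($\alpha=\sigma/2$ versus the paper's $\sigma/4$) differ immaterially.
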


\begin{proof}
We can suppose that $R < D_\kappa/2$ is small enough so that the sum of the angles of a triangle in $B(z,R)$ is at most $\pi + \sigma/4$.

Using the super-regularity of $A$ and $B$ at $z$ we find $0 < r < R$ such that if $x \in (B(z,r) \cap A) \setminus B$, $y \in P_{B}(x) \setminus A$ with $\max\{d(x,y),d(y,z)\} < r/2$ and $x' \in P_{A}(y)$, then $\angle_y(x,z) \ge \pi/2 - \sigma/4$ and if $x' \notin B$, $\angle_{x'}(y,z)\ge \pi/2 - \sigma/4$.

Observe that if $x' \in B$, then, by super-regularity of $B$ and the fact that $\sigma \le \pi$,
\[\angle_y(x,x') \ge \frac{\pi}{2} - \frac{\sigma}{4} \ge \frac{\sigma}{4}.\] 
Thus, we suppose next that $x' \notin B$. Then
\[\angle_y(x',z) \le \pi + \frac{\sigma}{4} - \angle_{x'}(z,y) - \angle_{z}(y,x') \le \pi + \frac{\sigma}{4} - \frac{\pi}{2} + \frac{\sigma}{4} - \sigma = \frac{\pi}{2} - \frac{\sigma}{2}.\]
Hence,
\[\angle_y(x,x') \ge \angle_y(x,z) - \angle_y(x',z) \ge \frac{\pi}{2} - \frac{\sigma}{4} - \frac{\pi}{2} +  \frac{\sigma}{2} = \frac{\sigma}{4}.\]
\end{proof}

\subsection{Local linear convergence of alternating projections} 

Let $(X,d)$ be a metric space and $A,B \subseteq X$ two nonempty sets. An {\it alternating projection sequence} $(x_n) \subseteq X$ starting at $x_0 \in A$ satisfies the conditions
\[x_{2n+1} \in P_B(x_{2n}) \quad \text{and} \quad  x_{2n+2} \in P_A(x_{2n+1})\]
for all $n \in \mathbb{N}$.

We recall that a sequence $(x_n)$ converges linearly to a point $x$ if there exists a positive constant $k$ and a rate $a \in (0,1)$ such that $d(x_n,x) \le k\cdot a^n$ for all $n \in \N$.
\begin{theorem}\label{thm-main-conv}
Let $\kappa > 0$, $X$ be a complete $\CAT(\kappa)$ space, and $A, B$ be closed subsets of $X$. Suppose that, at $z \in A \cap B$, $A$ is super-regular (or in particular $\UAG$) and intersects $B$ separably. Then any alternating projection sequence $(x_n)$ starting at $x_0 \in A$ sufficiently close to $z$ converges linearly to a point in $A \cap B$.
\end{theorem}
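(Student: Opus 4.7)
\medskip

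\noindent\textit{Proof proposal.} The plan is to follow the standard two-angle geometric argument from the Euclidean case, but carried out in a comparison triangle in $M_\kappa^2$, with careful bookkeeping so all iterates stay in a ball where the hypotheses apply. Let $\alpha, r_1 > 0$ be the constants furnished by the separable intersection of $A$ with $B$ at $z$. Pick $\varepsilon \in (0,\alpha/8)$ and let $r_2 > 0$ be associated, via super-regularity of $A$ at $z$, to this $\varepsilon$. Choose $r \in (0, \min\{r_1, r_2, D_\kappa/8\})$ so small that for any geodesic triangle in $X$ whose vertices lie in $\overline{B}(z,r)$, the angles of its $M_\kappa^2$-comparison triangle sum to at most $\pi+\alpha/8$ (the angle excess on $M_\kappa^2$ tends to $0$ with the area of the triangle).

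The key per-step estimate is the following. Suppose $x_{2n} \in A \cap B(z,r)$, $x_{2n+1} \in P_B(x_{2n})$, and $x_{2n+2} \in P_A(x_{2n+1})$, with $x_{2n} \notin B$, $x_{2n+1} \notin A$, and $\max\{d(x_{2n+1}, x_{2n}), d(x_{2n+1}, z)\} < r/2$; if any of these strictness conditions fails then some iterate already lies in $A \cap B$ and the sequence becomes constant afterwards, which trivially gives the conclusion. Separable intersection gives $\angle_{x_{2n+1}}(x_{2n}, x_{2n+2}) \ge \alpha$, while super-regularity of $A$ applied with $y = x_{2n+1}$, $x = x_{2n+2}$, $x' = x_{2n}$ yields $\angle_{x_{2n+2}}(x_{2n+1}, x_{2n}) \ge \pi/2 - \varepsilon$. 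Passing to the comparison triangle $\overline{\Delta} \subseteq M_\kappa^2$ with vertices $\overline{x}_{2n}, \overline{x}_{2n+1}, \overline{x}_{2n+2}$ (whose corresponding angles only increase), the angle-sum bound forces the comparison angle at $\overline{x}_{2n}$ to be at most $\pi/2 - \alpha + \varepsilon + \alpha/8$, strictly smaller than $\pi/2 - \varepsilon$. The spherical law of sines then gives
\[
\frac{\sin(\sqrt{\kappa}\, d(x_{2n+1}, x_{2n+2}))}{\sin(\sqrt{\kappa}\, d(x_{2n}, x_{2n+1}))} \le \frac{\cos(\alpha - 2\varepsilon - \alpha/8)}{\cos \varepsilon} =: \rho < 1,
\]
and shrinking $r$ further (to absorb the factor $\sin(\sqrt{\kappa}\, t)/(\sqrt{\kappa}\, t)$ being close to $1$) yields a strict contraction $d(x_{2n+1}, x_{2n+2}) \le c\, d(x_{2n}, x_{2n+1})$ for some fixed $c \in (\rho, 1)$.

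The next step requires no new angle information: since $x_{2n+1} \in B$ and $x_{2n+3} \in P_B(x_{2n+2})$ is a nearest point to $x_{2n+2}$ in $B$, we have $d(x_{2n+2}, x_{2n+3}) \le d(x_{2n+2}, x_{2n+1}) \le c\, d(x_{2n}, x_{2n+1})$. Reapplying the contraction argument to the triple $(x_{2n+2}, x_{2n+3}, x_{2n+4})$ and inducting, the gaps $d(x_n, x_{n+1})$ decay at least like $\sqrt{c}^{\,n}$, so the sequence $(x_n)$ is Cauchy, hence convergent by completeness of $X$ to some $x^*$; since $x_{2n} \in A$ and $x_{2n+1} \in B$ and both sets are closed, $x^* \in A \cap B$. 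The linear rate follows from $d(x_n, x^*) \le \sum_{k \ge n} d(x_k, x_{k+1})$.

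The inductive bookkeeping is what ties the argument together: one must ensure the iterates remain inside $B(z, r/2)$ throughout, so that the angle hypotheses continue to apply. This is arranged by demanding $d(x_0, z) < r/(4(1 + 1/(1-\sqrt{c})))$, since then $d(x_n, z) \le d(x_0, z) + \sum_{k<n} d(x_k, x_{k+1}) \le d(x_0, z) + d(x_0, x_1)/(1 - \sqrt{c})$ stays comfortably below $r/2$. The main obstacle I anticipate is precisely this interplay: the spherical angle excess and the nonlinearity of the sine law in $M_\kappa^2$ must be absorbed uniformly into the contraction constant, and this forces $r$ to be shrunk several times in succession, each time requiring a re-check that the angle bounds remain valid. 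Once this is organized the argument is essentially the Euclidean one of Lewis--Luke--Malick transcribed to the CAT$(\kappa)$ setting.
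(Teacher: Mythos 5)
Your proposal follows the same overall architecture as the paper's proof: a one-step contraction $d(x_{2n+1},x_{2n+2}) \le c\, d(x_{2n},x_{2n+1})$ extracted from the two angle conditions via a comparison triangle in $M_\kappa^2$, the free inequality $d(x_{2n+2},x_{2n+3}) \le d(x_{2n+1},x_{2n+2})$ from the nearest-point property of $x_{2n+3}$, inductive bookkeeping to keep all iterates within range of the hypotheses, and Cauchyness plus closedness. The one genuine difference is how the contraction is derived. The paper applies the spherical cosine law twice (at $x_{2n+1}$ with the separability angle $\alpha$, and at $x_{2n+2}$ with the super-regularity angle $\pi/2-\varepsilon$) and combines the two inequalities into $\tan(\sqrt{\kappa}\, d(x_{2n+1},x_{2n+2})) \le c'\tan(\sqrt{\kappa}\, d(x_{2n},x_{2n+1}))$ with $c'=(\cos\alpha+\sin\varepsilon)/(1-\sin\varepsilon)$; you instead use the spherical angle-excess bound together with the law of sines. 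Your route is workable and transparent, but as written the displayed ratio bound has a gap: the lower bound $\cos\varepsilon$ on the denominator $\sin\bigl(\angle_{\overline{x}_{2n+2}}\bigr)$ is only valid when that comparison angle lies in $[\pi/2-\varepsilon,\pi/2+\varepsilon]$, and super-regularity provides no upper bound on it, so a strongly obtuse angle there would make $\sin\bigl(\angle_{\overline{x}_{2n+2}}\bigr)$ small. The claimed inequality is nevertheless true: writing $\theta$ for the comparison angle at $\overline{x}_{2n+2}$, the excess bound gives $\angle_{\overline{x}_{2n}} \le \pi - 7\alpha/8 - \theta$, so the sine ratio is at most $\sin(\theta+7\alpha/8)/\sin\theta$, and this function is decreasing in $\theta$ (its derivative has the sign of $-\sin(7\alpha/8)$), hence maximized at $\theta=\pi/2-\varepsilon$, which is precisely the case you treated. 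You should add this monotonicity observation (or switch to the double cosine-law computation, which sidesteps the issue entirely). With that repair the argument is complete; the resulting rate constant is slightly worse than the paper's, which is immaterial for the statement.
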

\begin{proof}
Let $\alpha \in (0,\pi/2)$ and $r \in (0,D_\kappa/2)$ be given by the separable intersection property. Take $\eps \in (0,1)$ sufficiently small such that
\[c'=\frac{\cos \alpha + \sin \eps}{1 - \sin \eps} < 1.\]
We can assume that $r$ is small enough so that, on the one hand, it satisfies the super-regularity condition for $\eps$ and, on the other hand, 
\[c= \frac{c'}{\cos^2(r\sqrt{\kappa}/2)} < 1.\]

Consider the starting point $x_0 \in A \cap B(z,(1-c)r/4)$, and denote $D = d(z,x_0)$.

\begin{claim}
If for some $n \in \N$, $d(z,x_{2n+1})< r/2$ and $d(x_{2n},x_{2n+1})< r/2$, then 
\[d(x_{2n+1},x_{2n+2}) \le c d(x_{2n},x_{2n+1}).\]
\end{claim}
\begin{proof}[Proof of Claim]
Denote for simplicity $x=x_{2n}$, $y=x_{2n+1}$, and $x'=x_{2n+2}$. Note first that if $x=y$, then $x'=y$, and in this case the desired inequality is obvious. Thus we can assume that $x \notin B$ and $y \notin A$. Moreover, since $d(z,y) < r/2$ and $d(x,y) < r/2$, it follows that $d(z,x) < r$. 

Take $\Delta(\overline{x},\overline{y},\overline{x}')$, a comparison triangle in $M^2_\kappa$, for the geodesic triangle $\Delta(x,y,x')$. Because $B$ intersects $A$ separably at $z$, $\angle_{y}(x,x')\ge \alpha$. This shows, in particular, that $x \ne x'$. In addition, $\angle_{\overline{y}}(\overline{x},\overline{x}') \ge \alpha$ and so, applying the cosine law in $M^2_\kappa$ we obtain
\[\cos(\sqrt{\kappa}\, d(x,x'))  \le \cos(\sqrt{\kappa}\, d(x',y))\cos(\sqrt{\kappa}\, d(x,y))+ \sin(\sqrt{\kappa}\, d(x',y))\sin(\sqrt{\kappa}\, d(x,y))\cos \alpha.\]
At the same time, by super-regularity of $A$ at $z$, $\angle_{x'}(x,y) \ge \pi/2 - \eps$, hence $\angle_{\overline{x}'}(\overline{x},\overline{y}) \ge \pi/2 - \eps$. Again by the cosine law in $M^2_\kappa$ we have
\[\cos(\sqrt{\kappa}\, d(x,y)) \le \cos(\sqrt{\kappa}\, d(y,x'))\cos(\sqrt{\kappa}\, d(x,x'))+ \sin(\sqrt{\kappa}\, d(y,x'))\sin(\sqrt{\kappa}\, d(x,x'))\sin \eps.\]
Consequently,
\begin{align*}
\cos(\sqrt{\kappa}\, d(x,y)) & \le  \cos^2(\sqrt{\kappa}\, d(y,x'))\cos(\sqrt{\kappa}\, d(x,y))\\
& \quad + \cos(\sqrt{\kappa}\, d(y,x'))\sin(\sqrt{\kappa}\, d(x',y))\sin(\sqrt{\kappa}\, d(x,y))\cos \alpha\\
& \quad + \sin(\sqrt{\kappa}\, d(y,x'))\sin(\sqrt{\kappa}\, d(x,x'))\sin \eps.
\end{align*}
This yields
\begin{align*}
& \sin(\sqrt{\kappa}\, d(y,x'))\cos(\sqrt{\kappa}\, d(x,y)) \\
& \quad \le \cos(\sqrt{\kappa}\, d(y,x'))\sin(\sqrt{\kappa}\, d(x,y))\cos \alpha  + \sin(\sqrt{\kappa}\, d(x,x'))\sin \eps\\
& \quad \le \cos(\sqrt{\kappa}\, d(y,x'))\sin(\sqrt{\kappa}\, d(x,y))\cos \alpha + \sin(\sqrt{\kappa}(d(x,y) + d(y,x')))\sin \eps.
\end{align*}
Here we also used the fact that $d(x,x') \le d(x,y) + d(y,x') \le 2d(x,y) < r < D_\kappa/2$. Thus,
\begin{align*}
(1-\sin \eps)\sin(\sqrt{\kappa}\, d(y,x'))\cos(\sqrt{\kappa}\, d(x,y)) \le (\cos\alpha + \sin \eps)\cos(\sqrt{\kappa}\, d(y,x')))\sin(\sqrt{\kappa}\, d(x,y)),
\end{align*}
from where 
\begin{equation}\label{thm-main-conv-eq1}
\tan(\sqrt{\kappa}\, d(x',y)) \le c' \tan(\sqrt{\kappa}\, d(x,y)).
\end{equation}

Let $a \in (0,\pi/2)$, and define the function $f:[0,a] \to \R$, $f(t) = \cos^2 a \tan t - t$. One can easily see that $f(t) \le 0$ for all $t \in [0,a]$. Taking $a=r\sqrt{\kappa}/2$ and $t=\sqrt{\kappa}\, d(x,y) \le r\sqrt{\kappa}/2$, we get $c' \tan(\sqrt{\kappa\, }d(x,y)) \le c\sqrt{\kappa}\, d(x,y)$. This together with \eqref{thm-main-conv-eq1} and the fact that $\sqrt{\kappa}\, d(x',y) \le \tan(\sqrt{\kappa}\, d(x',y))$ proves the claim.
\end{proof}

We show next by induction that 
\[d(z,x_{2n+1})  \le 2D\frac{1-c^{n+1}}{1-c}, \quad d(x_{2n},x_{2n+1}) \le Dc^n, \quad \text{and} \quad d(x_{2n+1},x_{2n+2}) \le Dc^{n+1}.\]
For $n=0$, the verification is straightforward:
\[d(x_0,x_1) \le d(x_0,z) = D, \quad d(z,x_1) \le d(z,x_0) + d(x_0,x_1) \le 2D,\]
and, since $D < r/4$, applying the Claim,
\[d(x_1,x_2) \le cd(x_0,x_1) \le Dc.\]
Suppose now that the inequalities hold for $n=k$. We prove that they also hold for $n=k+1$. To see this, note that
\[d(x_{2k+2},x_{2k+3}) \le d(x_{2k+2},x_{2k+1}) \le Dc^{k+1} < \frac{r}{2}\]
and
\begin{align*}
d(z,x_{2k+3}) &\le d(z,x_{2k+1}) + d(x_{2k+1},x_{2k+2}) + d(x_{2k+2},x_{2k+3})\\
& \le 2D\frac{1-c^{k+1}}{1-c} + Dc^{k+1} + Dc^{k+1} = 2D\frac{1-c^{k+2}}{1-c} < \frac{r}{2}.
\end{align*}
We can now apply the Claim to get $d(x_{2k+3},x_{2k+4}) \le cd(x_{2k+2},x_{2k+3}) \le Dc^{k+2}$, which finishes the induction proof.

The sequence $(x_n)$ is Cauchy because
\[\sum_{i \ge 2m+1}d(x_i,x_{i+1}) \le 2D\sum_{i \ge m+1}c^i = 2D\frac{c^{m+1}}{1-c} \]
and
\[\sum_{i \ge 2m}d(x_i,x_{i+1}) \le Dc^m + 2D\frac{c^{m+1}}{1-c} = Dc^m\frac{1+c}{1-c}\]
for all $m \in \N$, so
\[d(x_n,x_{n+k}) \le D(\sqrt{c})^n\frac{1+c}{1-c}\] 
for all $n,k \in \N$. This means that $(x_n)$ converges to some $z' \in A \cap B$ and the convergence is linear with rate $\sqrt{c}$ since
\[d(x_n,z') \le D(\sqrt{c})^n\frac{1+c}{1-c}\]
for all $n \in \N$. Note also that by an appropriate choice of $\eps$ and $r$, the rate can be made arbitrarily close to $\sqrt{\cos \alpha}$.
\end{proof}

We hope that the theoretical ideas developed here help illuminate the fundamentals of alternating projection methods.  Their practical application is a topic of ongoing research;  here, we confine ourselves to one very simple nonconvex example as a concrete illustration of our main ideas.

The setting that we consider is the $n$-dimensional spherical space $\mathbb{S}^n$. Recall that the $n$-dimensional sphere is the set
\[\mathbb{S}^n = \{x \in \R^{n+1} \mid \langle x, x \rangle = 1\},\]
where $\langle \cdot, \cdot \rangle$ is the Euclidean scalar product. Endowed with the distance $d: {\mathbb{S}}^n \times {\mathbb{S}}^n \to \mathbb{R}$ that assigns to each $(x,y) \in {\mathbb{S}}^n \times {\mathbb{S}}^n$ the unique number $d_{\mathbb{S}^n}(x,y) \in [0,\pi]$ such that $\cos d_{\mathbb{S}^n}(x,y) = \langle x, y \rangle$, ${\mathbb{S}}^n$ is a geodesic space. In other words, $d_{\mathbb{S}^n}(x,y)$ is the length of the smallest arc of a great circle in ${\mathbb{S}}^n$ which joins $x$ and $y$. 

\begin{example}

Take a unit vector $a$ and the set $A = \{x \in \mathbb{S}^n \mid \langle x, a \rangle = 1/2\}$. This set is nonconvex but super-regular at all its points. 

To see this, let $z \in A$ and $\eps \in (0,1)$. Take $r = \arccos(\cos^2(\eps))/2$, and let $y \in B(z,r/2) \setminus A$, $x = P_A(y)$, and $x' \in A \cap B(z,r)$ with $x' \ne x$. Then $d_{\mathbb{S}^n}(a,x) = d_{\mathbb{S}^n}(a,x') = \pi/3$. Note also that $y \in [a,x]$. Denoting $D = d_{\mathbb{S}^n}(x,x')$, we have
\[D \le d_{\mathbb{S}^n}(x,z) + d_{\mathbb{S}^n}(z,x') \le 2r = \arccos(\cos^2(\eps)).\] 
Applying the spherical cosine law in the spherical  triangle with vertices $a,x,x'$, we get
\begin{align*}
\frac{1}{2} = \cos d_{\mathbb{S}^n}(a,x') & = \cos d_{\mathbb{S}^n}(a,x) \cos D +  \sin d_{\mathbb{S}^n}(a,x) \sin D \cos \angle_x(y,x') \\
& = \frac{1}{2} \cos D +  \frac{\sqrt{3}}{2} \sin D \cos \angle_x(a,x'),
\end{align*}
so
\[\cos \angle_x(a,x') = \frac{1}{\sqrt{3}}\cdot\frac{1 - \cos D}{\sin D} < \sqrt{1 - \cos D} < \sin \eps = \cos (\pi/2 - \eps).\]
This shows that $\angle_x(y,x') = \angle_x(a,x') > \pi/2 - \eps$.

Take another unit vector $b$ such that $\langle a, b \rangle \in (-\sqrt{3}/2, 0) \cup (0,\sqrt{3}/2)$, and let $B$ be the subset of $\mathbb{S}^n$ orthogonal to $b$. The sets $A$ and $B$ will intersect, and we fix $z \in A \cap B$. Note that $B$ is super-regular at $z$ since it is weakly convex. Moreover, $A$ and $B$ intersect separably there (by Theorem \ref{thm-angle-sep-int}), and an easy exercise shows that the alternating projection sequence for the sets $A$ and $B$ starting at a point $x \in A$ consists of the following update:

\begin{algorithm}[H]
\SetAlgoLined
  \While{$x \notin B$}{
   $\ds x' = \frac{x - \langle b , x \rangle b}{\|x - \langle b , x \rangle b\|};$\\[1mm]
   $\ds x'' = \frac{\sqrt{3}}{2}\cdot\frac{x' - \langle a , x' \rangle a}{\|x' - \langle a , x' \rangle a\|} + \frac{1}{2}a;$\\[1mm]
   $x=x''$\;
 }
\end{algorithm}

By Theorem \ref{thm-main-conv}, starting near $z$, the alternating projection sequence converges linearly to a point in $A \cap B$.
\end{example}

\section{Acknowledgements}
We would like to express our gratitude to Alexander Lytchak for his interest and detailed comments which significantly improved Section \ref{sect-curv-conv}. We also wish to thank the referees for their constructive remarks.

Research supported in part by National Science Foundation Grant DMS-2006990, by DGES Grant PGC2018-098474-B-C21, and by a grant of the Ministry of Research, Innovation and Digitization, CNCS/CCCDI -- UEFISCDI, project number PN-III-P1-1.1-TE-2019-1306, within PNCDI III.

\end{document}